\def\inv{{^{-1}}}
\def\Z{{\mathbb Z}}
\def\g{{\gamma}}
\def\TSG{{\mathrm{TSG_+}}}
\def\Aut{{\mathrm{Aut}}}
\def\Diff{{\mathrm{Diff_+}}}
\def\fix{{\mathrm{fix}}}
\def\INT{{\mathrm{int}}}
\newtheorem*{A4}{$A_4$ Theorem}
\newtheorem*{S4}{$S_4$ Theorem}
\newtheorem*{A5}{$A_5$ Theorem}
\newtheorem*{isometry}{Isometry Theorem}
\newtheorem*{orbits}{Orbits Lemma}
\newtheorem*{D2}{$D_2$ Lemma}
\newtheorem*{complete}{Complete Graph Theorem}
\def\fix{{\mathrm{fix}}}
\newcommand{\gen}[1]{\langle #1 \rangle}
\newtheorem{prop}{Proposition}
\newtheorem{thm}{Theorem}
\newtheorem*{Graph}{Edge Embedding Lemma}
\newtheorem*{Subgroup}{Subgroup Theorem}
\newtheorem*{fixed vertex}{Fixed Vertex Property}
\newtheorem{lemma}{Lemma}
\title[Polyhedral topological symmetry groups]{Complete graphs whose topological symmetry groups are polyhedral}
\author{Erica Flapan, Blake Mellor, and Ramin Naimi}
\subjclass{57M25, 57M15, 05C10}
\thanks{The authors were supported in part by NSF Grants DMS-0905087, DMS-0905687, DMS-0905300.}
\keywords{topological  symmetry groups}
\address{Department of Mathematics, Pomona College, Claremont, CA 91711, USA}
\address{Department of Mathematics, Loyola Marymount University, Los Angeles, CA 90045, USA}
\address{Department of Mathematics, Occidental College, Los Angeles, CA 90041, USA}
\begin{document}
\date \today
\maketitle
\begin{abstract}
We determine for which $m$, the complete graph $K_m$ has an embedding in $S^3$ whose topological symmetry group is isomorphic to one of the polyhedral groups: $A_4$, $A_5$, or $S_4$.
\end{abstract}

\section{Introduction}
Characterizing the symmetries of a molecule is an important step in predicting its chemical behaviour.   Chemists have long used the group of rigid symmetries, known as the {\it point group}, as a means of representing the symmetries of a molecule.  However, molecules which are flexible or partially flexible may have
symmetries which are not included in the point group. Jon Simon  \cite {Si} introduced the concept of the
{\it topological symmetry group} in order to study symmetries of such non-rigid molecules. 
The topological symmetry group provides a way to classify, not only the symmetries of molecular graphs, but the symmetries of any
graph embedded in $S^3$. 

We define the topological symmetry group as follows.  Let $\gamma$ be an abstract graph, and let $\Aut(\gamma)$ denote the automorphism group of $\gamma$.  Let $\Gamma$ be the image of an embedding of $\gamma$ in $S^3$.  The \emph{topological symmetry group} of $\Gamma$, denoted by $\mathrm{TSG}(\Gamma)$, is the subgroup of $\Aut(\gamma)$ which is induced by homeomorphisms of the pair $(S^3,\Gamma)$. The {\it orientation preserving topological symmetry group} of $\Gamma$, denoted by $\TSG(\Gamma)$, is the subgroup of $\Aut(\gamma)$ which is induced by orientation preserving homeomorphisms of the pair $(S^3,\Gamma)$. In this paper we are only concerned with $\TSG(\Gamma)$, and thus for simplicity we abuse notation and refer to the group $\TSG(\Gamma)$ simply as the {\it topological symmetry group} of $\Gamma$.

Frucht \cite{Fr} showed that every finite group is 
the automorphism group 
of some connected graph.  Since every graph can be embedded in $S^3$,
it is natural to ask whether every finite group 
can be realized as  
$\TSG(\Gamma)$ 
for some connected graph $\Gamma$ embedded in $S^3$.   Flapan, Naimi, Pommersheim, and Tamvakis proved in \cite{FNPT} that the answer to this question is 
``no'', and proved that there are strong restrictions on which groups can occur as topological symmetry groups.  For example, it was shown that $\TSG(\Gamma)$ can never be the alternating group $A_n$ for $n>5$.  

The special case of topological symmetry groups of complete graphs  is interesting to consider
because a complete graph $K_n$ has the largest automorphism group of any graph with $n$ vertices. In \cite{FNT}, Flapan, Naimi, and Tamvakis characterized which finite groups can occur as topological symmetry groups of embeddings of complete graphs in $S^3$ as follows.  

\medskip \begin{complete} \cite{FNT}
\label{T:TSG2} 
A finite group $H$ is isomorphic to $\TSG(\Gamma)$ for some embedding $\Gamma$ of a complete graph in $S^3$ if and only if $H$ is a finite cyclic group, a dihedral group, a subgroup of $D_m \times D_m$ for some odd $m$, or $A_4$, $S_4$, or $A_5$.
\end{complete}

We use $D_m$ to denote the dihedral group with $2m$ elements.  The groups $A_4$, $S_4$, or $A_5$, are known as {\it polyhedral groups} because they consist of: the group of rotations of a tetrahedron (which is isomorphic to $A_4$), the group of rotations of a cube or octahedron (which is isomorphic to $S_4$), and  the group of rotations of a dodecahedron or icosahedron (which is isomorphic to $A_5$).

Observe that the Complete Graph Theorem does not tell us which complete graphs can have a given group $H$ as their topological symmetry group.  In this paper we characterize which complete graphs can have each of the polyhedral groups as its topological symmetry group.  In particular, in the following results we determine for which $m$, $K_m$ has an embedding $\Gamma$  with $\TSG(\Gamma)\cong A_4$, $A_5$, or $S_4$.  
\begin{A4}
A complete graph $K_m$ with $m\geq 4$ has an embedding $\Gamma$ in $S^3$ such that  $\TSG(\Gamma) \cong A_4$ if and only if $m \equiv 0$, $1$, $4$, $5$, $8 \pmod {12}$.
\end{A4}

\begin{A5}
A complete graph $K_m$ with $m\geq 4$ has an embedding $\Gamma$ in $S^3$ such that $\TSG(\Gamma) \cong A_5$ if and only if $m \equiv 0$, $1$, $5$, $20 \pmod{60}$.
\end{A5}

\begin{S4}
A complete graph $K_m$ with $m\geq 4$ has an embedding $\Gamma$ in $S^3$ such that  $\TSG(\Gamma) \cong S_4$ if and only if $m \equiv 0$, $4$, $8$, $12$, $20 \pmod {24}$.
\end{S4}

Observe that if $K_n$ has an embedding with topological symmetry group isomorphic to $A_5$ or $S_4$, then $K_n$ also has an embedding with topological symmetry group isomorphic to $A_4$.

In \cite{FMNY} we characterize which complete graphs can have a cyclic group, a dihedral group, or another subgroup of $D_m \times D_m$ as its topological symmetry group.  

\bigskip


\section{Necessity of the conditions} \label{S:nonexistence}

In this section, we prove the necessity of the conditions given in the $A_4$, $A_5$ and $S_4$ Theorems.   We begin by listing some results that were proved elsewhere that will be useful to us.

\begin{orbits}\cite{CFO} \label{orbit lemma}
If $\alpha$ and $\beta$ are permutations of a finite set such that $\alpha$ and $\beta$ commute, then $\beta$ takes $\alpha$-orbits to $\alpha$-orbits of the same length.\end{orbits}

\medskip

\begin{D2}\cite{CFO} \label{P:4n+3} If $m\equiv 3\pmod{4}$, then there is no embedding $\Gamma$ of $K_{m}$ in $S^3$  such that $\TSG(\Gamma)$ contains a subgroup isomorphic to $D_2$.\end{D2} 

\medskip

Note that  each of the groups $A_4$, $A_5$, and $S_4$ contains $D_2$ as a subgroup.  Recall that the groups $A_4$ and $A_5$ can be realized as the group of rotations of a solid tetrahedron and a solid dodecahedron respectively.  Looking at each of these groups of rotations we see that any two cyclic subgroups of the same order are conjugate.  The group $S_4$ can be realized as the group of rotations of a cube.  It follows that all cyclic groups of order 3 or order 4 are conjugate.  Up to conjugacy, $S_4$ contains two cyclic groups of order 2, those which are contained in $A_4$ and those which are not.  This implies the following observation that we will make use of in this section. 
  
\begin{fixed vertex}
Let $G\cong A_4$, $A_5$ and suppose $G$ acts faithfully on a graph $\Gamma$.  Then all elements of $G$ of a given order fix the same number of vertices.  Furthermore, since all of the non-trivial elements of $G$ have prime order, all of the elements in a given cyclic subgroup fix the same vertices.  

Let $H$ be isomorphic to $S_4$ and suppose that $H$ acts faithfully on $\Gamma$.  Then all elements of $H$ of order 3 fix the same number of vertices, and all elements of $H$ of order 4 fix the same number of vertices.  All involutions of $H$ which are in $G\cong A_4$ fix the same number of vertices, and all involutions of $H$ which are not in $G$ fix the same number of vertices.

\end{fixed vertex}

\medskip

We will also use the theorem below to focus on embeddings $\Gamma$ of $K_m$ in $S^3$ such that $\TSG(\Gamma)$ is induced by an isomorphic finite subgroup of $\mathrm{SO}(4)$ (the group of orientation preserving isometries of $S^3$).    This theorem follows from a result in \cite{FNPT} together with the recently proved Geometrization Theorem \cite{MF}.
\medskip 

\begin{isometry} 
\label{SO(4)}
Let $\Omega$ be an embedding of some $K_{m}$ in $S^3$.  Then $K_m$ can be re-embedded in $S^3$ as $\Gamma$ such that $\TSG(\Omega) \leq
\TSG(\Gamma)$ and $\TSG(\Gamma)$ is induced by an isomorphic finite
subgroup of $\mathrm{SO}(4)$.
\end{isometry}
\medskip

Suppose that $\Omega$ is an embedding of a complete graph $K_m$ in $S^3$ such that $G=\TSG(\Omega)$ is isomorphic to $A_4$, $A_5$ or $S_4$.  By applying the Isometry Theorem, we obtain a re-embedding $\Gamma$ of $K_m$ in $S^3$ such that $G\leq \TSG(\Gamma)$ is induced on $\Gamma$ by an isomorphic finite subgroup $\widehat{G}\leq \mathrm{SO}(4)$.   This simplifies our analysis since every finite order element of $\mathrm{SO}(4)$ is either a rotation with fixed point set a geodesic circle or a glide rotation with no fixed points.   If the fixed point sets of two such rotations intersect but do not coincide, then they intersect in 2 points. Furthermore, if all of the elements of a finite subgroup of $\mathrm{SO}(4)$ pointwise fix the same simple closed curve, then that subgroup must be cyclic (this can be seen by looking at the action of the subgroup on the normal bundle).

For each $g\in G$, we let $\widehat{g}$ denote the element of $\widehat{G}$ which induces $g$. Since  $G$ has finite order, if $g\in G$ fixes both vertices of an edge, then $\widehat{g}$ pointwise fixes that edge. Since the fixed point set of every element of $\widehat{G}$ is either a circle or the empty set, no non-trivial element of $G$ can fix more than 3 vertices of $\Gamma$.  If $g\in G$ fixes 3 vertices, then $\fix(\widehat{g})$ is precisely these 3 fixed vertices together with the edges between them.  Suppose that $g\in G$ fixes 3 vertices and has order 2.  Then $g$ must interchange some pair of vertices $v$ and $w$ in $\Gamma$.   Thus $\widehat{g}$ must fix a point on the edge $\overline{vw}$.  As this is not possible, no order 2 element of $G$ fixes more than 2 vertices.  Since $G\leq \Aut(K_m)$ and $G$ is isomorphic to $A_4$, $A_5$ or $S_4$, $m\geq 4$.  In particular, since no $g\in G$ fixes more than 3 vertices, each $g\in G$ is induced by precisely one $\widehat{g}\in \widehat{G}$.  The following lemmas put further restrictions on the number of fixed vertices of each element of a given order. 
  
\medskip

\begin{lemma}\label{n2not2}  Let $G\leq \Aut(K_m)$ which is isomorphic to $A_4$ or $A_5$.  Suppose there is an embedding $\Gamma$ of $K_{m}$ in $S^3$ such that $G$ is induced on $\Gamma$ by an isomorphic subgroup $\widehat{G}\leq\mathrm{SO}(4)$.  Then no order 2 element of $G$ fixes more than 1 vertex of $\Gamma$.
\end{lemma}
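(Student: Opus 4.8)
The plan is to argue by contradiction: suppose some order 2 element $g \in G$ fixes exactly 2 vertices of $\Gamma$ (it cannot fix 3 or more by the discussion preceding the lemma, and ``more than 1'' means $2$ or $3$, so $2$ is the only remaining case). Let $v, w$ be the two vertices fixed by $g$; then $\fix(\widehat g)$ is the geodesic circle containing $v$, $w$ and the edge $\overline{vw}$. The strategy is to produce a second element of $\widehat G$ whose fixed circle would have to intersect $\fix(\widehat g)$ in more than $2$ points (or to coincide with it in a way that forces a non-cyclic subgroup to fix a circle pointwise), contradicting the structure of finite subgroups of $\mathrm{SO}(4)$ recalled in the paragraph before the lemma.

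**Key steps.** First I would invoke the Fixed Vertex Property: since $G \cong A_4$ or $A_5$, all involutions of $G$ fix the same number of vertices, so \emph{every} involution of $G$ fixes exactly $v$ and $w$ --- the same two vertices --- because all involutions in a single cyclic (here order 2) subgroup fix the same vertices, and in $A_4$ and $A_5$ the order-2 elements lie in subgroups isomorphic to $D_2$ (the Klein four-group), inside which there are three involutions. So take the $D_2 \leq G$ containing $g$; its three non-trivial elements $g_1, g_2, g_3$ each fix exactly two vertices. Next I would observe that these three involutions cannot all fix the \emph{same} pair $\{v,w\}$: if $\widehat g_1$ and $\widehat g_2$ both pointwise fix the circle through $v$ and $w$, then the subgroup $\langle \widehat g_1, \widehat g_2\rangle \cong D_2$ pointwise fixes a simple closed curve, which forces it to be cyclic --- a contradiction. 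Hence the three fixed circles $\fix(\widehat g_i)$ are pairwise distinct, yet all three pass through both $v$ and $w$. But $\widehat g_1$ commutes with $\widehat g_2$, so $\widehat g_2$ preserves $\fix(\widehat g_1)$; since $\fix(\widehat g_1) \neq \fix(\widehat g_2)$ these two circles meet in exactly $2$ points, which must be $v$ and $w$. Now the third circle $\fix(\widehat g_3)$ also contains $v$ and $w$, and $\widehat g_3 = \widehat g_1 \widehat g_2$; I would then derive the contradiction by analyzing how $\widehat g_3$ acts on $\fix(\widehat g_1)$ (it preserves this circle and fixes $v, w$, so it acts as the identity or as a reflection of that circle through $v,w$), and showing this is incompatible with $\fix(\widehat g_3)$ being a distinct circle through $v$ and $w$ on which $\widehat g_3$ acts trivially.

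**The main obstacle.** The step I expect to require the most care is the final contradiction: ruling out the configuration of three distinct geodesic circles in $S^3$ all passing through the same two points $v, w$, carrying a faithful $D_2$-action by isometries in which each involution pointwise fixes one of the circles. The cleanest route is probably to pass to the linear model --- realize $\widehat G \leq \mathrm{SO}(4)$ acting on $S^3 \subset \R^4$, note that the common point $v$ has its antipode $-v$ also fixed by all of $D_2$ (since each $\widehat g_i$ is linear and fixes $v$), so in fact each $\widehat g_i$ fixes the great circle through $v$ and $-v$ \emph{and} a complementary great circle; then the eigenvalue/block-diagonal structure of commuting involutions in $\mathrm{SO}(4)$ makes it transparent that two distinct such involutions cannot share a $2$-dimensional fixed subspace direction, pinning down the contradiction. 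I would also double-check the edge-crossing argument used earlier in the excerpt still applies: $g$ interchanges some pair of vertices (since $m \geq 4$ and $g$ fixes only $2$), so $\widehat g$ fixes a midpoint of an edge not on $\fix(\widehat g)$ --- wait, that only shows $\widehat g$ fixes a point off the two named vertices, which is automatic on the circle; the real leverage is entirely in the $D_2$ structure, so that is where I would concentrate the write-up.
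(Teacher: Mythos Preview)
Your plan contains a genuine gap. The sentence ``every involution of $G$ fixes exactly $v$ and $w$ --- the same two vertices'' does not follow from the Fixed Vertex Property, which only guarantees that each involution fixes the same \emph{number} of vertices. You seem to sense this, since you next argue that the three involutions of your $D_2$ \emph{cannot} all fix the same pair --- but then you immediately conclude that ``the three fixed circles $\fix(\widehat g_i)$ are pairwise distinct, yet all three pass through both $v$ and $w$,'' which is inconsistent with what you just established. In fact, once two commuting involutions $g_1, g_2$ are known not to fix the same vertex pair, commutativity forces $g_2$ to \emph{interchange} the vertices $v, w$ fixed by $g_1$, so $v, w \notin \fix(\widehat g_2)$ and the circles do not share $v$ and $w$ at all. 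Worse, even if you did have three distinct geodesic circles through two common points, that configuration is perfectly consistent with a faithful $D_2$-action in $\mathrm{SO}(4)$: take $\langle \mathrm{diag}(1,1,-1,-1),\ \mathrm{diag}(1,-1,1,-1)\rangle$, whose three fixed circles all pass through $\pm e_1$. So your proposed endgame cannot produce a contradiction.

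The paper's proof supplies the idea you are missing. Instead of looking for common \emph{vertices}, look at the \emph{midpoints} of the three fixed edges. For $i \ne j$, the involution $g_j$ setwise preserves the pair of vertices fixed by $g_i$ (by commutativity) but does not fix them both, so $\widehat g_j$ inverts the edge $\overline{v_iw_i}$ and hence fixes its midpoint; and $\widehat g_i$ fixes that midpoint trivially. Thus all three midpoints lie on every $\fix(\widehat g_i)$. Three distinct points determine a geodesic circle uniquely, so $\fix(\widehat g_1) = \fix(\widehat g_2) = \fix(\widehat g_3)$, and now the non-cyclic group $D_2$ pointwise fixes a circle --- the contradiction you were aiming for.
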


\begin{proof}  As observed above, no order 2 elements of $G$ fixes more than 2 vertices.
Suppose some order 2 element of $G$ fixes 2 vertices of $\Gamma$.  Thus, by the Fixed Vertex Property, each order 2 element of $\widehat{G}$ fixes 2 vertices, and hence also pointwise fixes the edge between the 2 vertices.  Now observe that two distinct involutions of $\widehat{G}$ cannot pointwise fix the same edge, since a cyclic group can have at most one element of order 2. 

 Observe that $\widehat{G}$ contains a subgroup $\widehat{H}\cong D_2$. Since $D_2$ contains 3 elements of order 2, $\Gamma$ has 3 edges which are each pointwise fixed by precisely one order 2 element of $\widehat{G}$.  We see as follows that each order 2 element of $\widehat{H}$ must setwise fix all 3 of these edges. Let $\widehat{g}$ and $\widehat{h}$ be order 2 elements of $\widehat{H}$, and let $x$ and $y$ be the vertices of the edge that is pointwise fixed by $\widehat{g}$. Since all elements of $D_2$ commute, $g(h(x)) = h(g(x)) = h(x)$, so $h(x)$ is fixed by $g$. Since $x$ and $y$ are the only vertices that are fixed by $g$, $h(x) \in \{x,y\}$. Similarly for $h(y)$. So $\widehat{h}$ setwise fixes the edge $\overline{xy}$. It follows that each order 2 element of $\widehat{H}$ setwise fixes all 3 of these edges.  This implies that each order 2 element fixes the midpoint of each of the 3 edges.  These 3 midpoints determine a geodesic, which must be pointwise fixed by all 3 order 2 elements of $\widehat{H}$. But this is impossible since a cyclic group can have at most one element of order 2.  \end{proof}

\medskip

\begin{lemma}  \label{involution} Let $G\leq \Aut(K_m)$ which is isomorphic to $A_4$.  Suppose there is an embedding $\Gamma$ of $K_{m}$ in $S^3$ such that $G$ is induced on $\Gamma$ by an isomorphic subgroup $\widehat{G}\leq\mathrm{SO}(4)$.   If an order 2 element of $G$ fixes some vertex $v$, then $v$ is fixed by every element of $G$.
\end{lemma}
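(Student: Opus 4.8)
The plan is to prove that the stabilizer $G_v=\{g\in G:g(v)=v\}$ is all of $G$. Recall that $A_4$ has a normal Klein four-subgroup $V\cong D_2$, consisting of the identity together with the three involutions of $G$. Since $V\le G_v$ forces $|G_v|\in\{4,12\}$ (as $4\mid |G_v|$ and $|G_v|\mid 12$), and the presence of any order $3$ element in $G_v$ then forces $|G_v|=12$, it suffices to show: (i) all three involutions of $G$ fix $v$, and (ii) some order $3$ element of $G$ fixes $v$.

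For (i), first I would pin down the fixed-vertex sets of the involutions. The hypothesis gives an involution $g_0\in G$ with $g_0(v)=v$, and Lemma~\ref{n2not2} says no involution of $G$ fixes more than one vertex, so $g_0$ fixes exactly $v$; by the Fixed Vertex Property every involution of $G$ then fixes exactly one vertex. Let $g_1,g_2,g_3$ be the involutions of $G$, with $g_i$ fixing the single vertex $v_i$. Since the $g_i$ pairwise commute (they lie in $V$), for $i\ne j$ we get $g_i(g_j(v_i))=g_j(g_i(v_i))=g_j(v_i)$, so $g_j(v_i)$ is fixed by $g_i$ and hence equals $v_i$; but then $v_i$ is fixed by $g_j$, so $v_i=v_j$. (Equivalently, this is the Orbits Lemma applied to the length-one $g_i$-orbit $\{v_i\}$.) Thus $v_1=v_2=v_3$, and since $g_0$ is one of the $g_i$ this common vertex is the given $v$; so $V\le G_v$.

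For (ii), I would take any order $3$ element $h\in G$. Because $V$ is normal, $hg_1h\inv$ is an involution of $G$, say $g_j$ (possibly $g_j=g_1$). Then $g_j(h(v))=(hg_1h\inv)(h(v))=h(g_1(v))=h(v)$, so $h(v)$ is fixed by $g_j$, and since the only vertex fixed by $g_j$ is $v$, we conclude $h(v)=v$. Hence $G_v$ contains $V$ and $h$, so $G_v=G$, which is the claim. I do not expect a real obstacle: the single substantive input is Lemma~\ref{n2not2} (which is where the $\mathrm{SO}(4)$ hypothesis is actually used to rule out an involution fixing two vertices), and the rest is elementary group theory of $A_4$ together with commutativity of $V$ and normality of $V$ in $A_4$.
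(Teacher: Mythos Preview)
Your proof is correct and follows essentially the same approach as the paper: both use Lemma~\ref{n2not2} to pin down that each involution fixes a unique vertex, then commutativity of the Klein four-subgroup (your direct computation is exactly the content of the Orbits Lemma the paper cites) to show all involutions fix $v$, and finally conjugation by an order $3$ element to show it too fixes $v$. Your stabilizer/Lagrange framing is a slight repackaging of the paper's observation that $G=\langle\varphi_1,\varphi_2,\psi\rangle$, but the substance is identical.
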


\begin{proof}  Suppose an order 2 element $\varphi_1\in G$ fixes a vertex $v$.  By Lemma \ref{n2not2}, $\varphi_1$ fixes no other vertices of $\Gamma$.  Since $G\cong A_4$, there is an involution  $\varphi_2\in G$ such that $\gen{\varphi_1,\varphi_2}\cong \Z_2\times\Z_2$.  Now by the Orbits Lemma, $\varphi_2$ takes fixed vertices of $\varphi_1$ to fixed vertices of $\varphi_1$.  Thus $\varphi_2(v)=v$.  Hence $v$ is fixed by $\gen{\varphi_1,\varphi_2}$.  Furthermore, all of the order 2 elements of $G$ are in $\gen{\varphi_1,\varphi_2}$.  Thus $v$ is the only vertex fixed by any order 2 element of $G$.

Let $\psi$ be an order 3 element of $G$.  Now $\psi\varphi_1\psi^{-1}$ has order 2 and fixes $\psi(v)$.  Thus $\psi(v)=v$.  Since $G=\gen{ \varphi_1,\varphi_2, \psi}$, $v$ is fixed by every element of $G$.
\end{proof}

\medskip

\begin{lemma}  \label{n2n3}  Let $G\leq \Aut(K_m)$ which is isomorphic to $A_4$.  Suppose there is an embedding $\Gamma$ of $K_{m}$ in $S^3$ such that $G$ is induced on $\Gamma$ by an isomorphic subgroup $\widehat{G}\leq\mathrm{SO}(4)$.    If some order 2 element of $G$ fixes a vertex of $\Gamma$, then no element of $G$ fixes 3 vertices.\end{lemma}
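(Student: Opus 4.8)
The plan is to combine Lemmas~\ref{n2not2} and~\ref{involution} with the structure of fixed-point sets in $\mathrm{SO}(4)$. Since by hypothesis some order $2$ element of $G$ fixes a vertex, Lemma~\ref{involution} provides a vertex $v$ fixed by \emph{every} element of $G$. I would then suppose, for contradiction, that some $g\in G$ fixes $3$ vertices. By Lemma~\ref{n2not2} no order $2$ element fixes more than one vertex, and the only other nontrivial order in $A_4$ is $3$, so $\psi:=g$ has order $3$; since $\psi$ fixes $v$, its fixed vertex set is $\{v,a,b\}$ with $v,a,b$ distinct. As noted just before Lemma~\ref{n2not2}, $\fix(\widehat\psi)$ is then the geodesic circle $C=\{v,a,b\}\cup\overline{va}\cup\overline{ab}\cup\overline{bv}$.

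Next I would pick an involution $\varphi\in G$ and put $\psi'=\varphi\psi\varphi^{-1}$, an element of order $3$. Because $A_4$ has no subgroup of order $6$, $\varphi$ cannot normalize $\gen{\psi}$, so $\psi'\notin\gen\psi$; hence $\gen\psi$ and $\gen{\psi'}$ are distinct subgroups of order $3$ and $\gen{\psi,\psi'}=A_4$ is non-cyclic. Since $\widehat{\psi'}=\widehat\varphi\,\widehat\psi\,\widehat\varphi^{-1}$ and $\widehat\varphi$ fixes $v$, its fixed circle is $C'=\widehat\varphi(C)=\{v,\varphi(a),\varphi(b)\}\cup\overline{v\varphi(a)}\cup\overline{\varphi(a)\varphi(b)}\cup\overline{\varphi(b)v}$. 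If $C=C'$, then $\widehat\psi$ and $\widehat{\psi'}$ both pointwise fix $C$, forcing $\gen{\widehat\psi,\widehat{\psi'}}$ to be cyclic, a contradiction; so $C\neq C'$. As $C$ and $C'$ are fixed circles of rotations that intersect (both contain $v$) but do not coincide, they meet in exactly $2$ points, say $C\cap C'=\{v,p\}$.

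The contradiction then comes out of locating $p$, splitting on whether $\varphi$ interchanges $a$ and $b$. If it does, then $C'=C$, already excluded. If it does not, then since $\varphi$ is an involution fixing no vertex other than $v$, the vertices $v,a,b,\varphi(a),\varphi(b)$ are all distinct; in particular $a,b\notin C'$ and $\overline{ab}$ is not one of the three edges of $C'$, so---because distinct edges of an embedded graph meet only at common endpoints and no interior of an edge contains a vertex---$C'\cap\overline{ab}=\emptyset$. Hence the second point $p$ lies in the interior of $\overline{va}$ or of $\overline{bv}$, say $\mathrm{int}(\overline{va})$. But $p\in C'$ is then an interior point of an edge of $\Gamma$, so it is not a vertex, and therefore lies in the interior of one of the three edges of $C'$; two edges sharing an interior point coincide, so $\overline{va}$ is an edge of $C'$, whence $a\in\{\varphi(a),\varphi(b)\}$---impossible. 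I expect this last paragraph to be the only delicate part: one must track carefully which of the vertices $v,a,b,\varphi(a),\varphi(b)$ can coincide and use repeatedly that in an embedded graph no vertex lies in the interior of an edge and no two edges cross in their interiors. The rest is bookkeeping with the Sylow structure of $A_4$ and the fact that a finite group of rotations pointwise fixing a common circle must be cyclic.
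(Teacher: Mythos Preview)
Your argument is correct and follows essentially the same strategy as the paper's: use Lemmas~\ref{n2not2} and~\ref{involution} to get a vertex $v$ fixed by all of $G$, take a second order-$3$ element whose cyclic group differs from $\gen{\psi}$, and obtain a contradiction from the two fixed circles meeting at $v$ and a second point. The paper's version is shorter in two places. First, rather than constructing $\psi'=\varphi\psi\varphi^{-1}$ explicitly, it simply picks any order-$3$ element $g$ with $\gen{g,\psi}$ non-cyclic and invokes the Fixed Vertex Property to conclude that $\fix(\widehat g)$ is also a triangle of three vertices and three edges. Second, its endgame avoids your case analysis entirely: since edges of an embedded graph cannot meet in their interiors and no vertex lies in the interior of an edge, the second intersection point $x$ of the two fixed circles is forced to be a vertex of both triangles, so both $\widehat\psi$ and $\widehat g$ pointwise fix the edge $\overline{xv}$, contradicting non-cyclicity. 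Your explicit description $C'=\widehat\varphi(C)$ buys you concrete control over the vertices of $C'$, but that control is not actually needed once you observe that the second intersection point must be a common vertex.
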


\begin{proof} Suppose some order 2 element of $G$ fixes a vertex $v$.  By Lemma 2, every element of $G$ fixes $v$. Suppose that $G$ contains an  element $\psi$ which fixes 3 vertices.   It follows from Lemma 1 that the order of $\psi$ must be  3.  Now let $g\in G$ have order $3$ such that $\gen{ g, \psi}$ is not cyclic.  It follows from the Fixed Vertex property that $\fix(\widehat{\psi})$ and $\fix(\widehat{g})$ each consist of 3 vertices and 3 edges.  Since $v\in \fix(\widehat{g})\cap\fix(\widehat{\psi})$, there must be another point $x\in \fix(\widehat{g})\cap\fix(\widehat{\psi})$.  However, since two edges cannot intersect in their interiors, $x$ must be a vertex of $\Gamma$.  This implies that  $\widehat{\psi}$ and $\widehat{g}$ pointwise fix the edge $\overline{xv}$.  However, this is impossible since $\gen{ \psi,g}$ is not cyclic.  Thus no element of $G$ fixes 3 vertices.\end{proof}
 \bigskip

 \begin{lemma}  \label{nknot3} Let $G\leq \Aut(K_m)$ which is isomorphic to  $A_5$.  Suppose there an embedding $\Gamma$ of $K_{m}$ in $S^3$ such that $G$ is induced on $\Gamma$ by an isomorphic subgroup $\widehat{G}\leq\mathrm{SO}(4)$.    Then no element of $G$ fixes 3 vertices.\end{lemma}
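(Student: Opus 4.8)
The plan is to proceed by contradiction, mimicking and extending the argument of Lemma \ref{n2n3}, but using the richer subgroup structure of $A_5$ to avoid the hypothesis that an order 2 element fixes a vertex. First I would suppose some element $\psi \in G$ fixes 3 vertices of $\Gamma$. By Lemma \ref{n2not2}, no order 2 element of $G$ fixes more than 1 vertex, and no element fixes more than 3; so the order of $\psi$ must be 3 or 5. The strategy is to play two commuting-or-generating elements against each other: if $\psi$ has order $p$, I would pick another element $g$ of order $p$ (or perhaps of a different order) such that $\gen{g,\psi}$ is not cyclic, and then compare $\fix(\widehat\psi)$ and $\fix(\widehat g)$, each of which (by the Fixed Vertex Property) consists of 3 vertices and the 3 edges joining them.

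The key geometric input — already used in the preceding lemmas — is that $\fix(\widehat\psi)$ and $\fix(\widehat g)$ are geodesic circles in $S^3$, so if they do not coincide they meet in at most 2 points; and two edges of $\Gamma$ cannot meet in their interiors, so any intersection point of the two fixed circles that is not a vertex is impossible once we know the circles are made of vertices and edges. Thus I need to force $\fix(\widehat\psi)$ and $\fix(\widehat g)$ to share 2 points, at least one of which must then be a vertex lying on an edge of each — and in fact I'd want to force them to share a vertex $v$, giving a second shared point $x$; since $x$ cannot be an interior point of an edge, $x$ is a vertex, so $\widehat\psi$ and $\widehat g$ both pointwise fix the edge $\overline{xv}$, contradicting non-cyclicity of $\gen{\psi,g}$. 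The main obstacle, and the place where $A_5$ really differs from $A_4$, is producing that common fixed vertex $v$ without the crutch of Lemma \ref{involution}: in $A_5$ there is no normal $\Z_2 \times \Z_2$, so I expect to argue instead that the three vertices fixed by $\psi$ form a $\psi$-invariant set on which a suitable subgroup acts, and use the Orbits Lemma together with counting (the fixed 3-element vertex set cannot be permuted freely by an element of order 5, etc.) to locate a vertex fixed by enough elements to generate a non-cyclic group while pointwise fixing an edge.

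Concretely, I would first handle the case $\o(\psi) = 5$: an order 5 element $\widehat\psi$ of $\mathrm{SO}(4)$ with a fixed circle would have to permute the 3 vertices on that circle, but $5 \nmid$ any nontrivial orbit size available among 3 points, so all 3 are fixed — fine, that's consistent, so instead I use that in $A_5$ the normalizer of $\gen\psi$ is $D_5$, whose order 2 elements invert $\psi$ and hence setwise preserve $\fix(\widehat\psi)$, permuting its 3 vertices; an involution permuting 3 vertices fixes one of them, say $v$, and then $v$ is fixed by an order 2 element, so by the $A_4$-subgroup containing it together with $\psi$-conjugation I can propagate fixed vertices and derive a contradiction as in Lemmas \ref{involution}–\ref{n2n3}. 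Then for $\o(\psi) = 3$: pick $g$ of order 3 with $\gen{g,\psi}$ non-cyclic (such exist since the order 3 elements of $A_5$ do not all commute), and show $\fix(\widehat\psi) \cap \fix(\widehat g) \neq \emptyset$ by a counting argument on the 3-vertex sets inside the (at most $m$) vertices together with the Orbits Lemma applied to elements normalizing both cyclic subgroups — forcing a shared vertex $v$ and then the extra intersection point $x$, hence the pointwise-fixed edge $\overline{xv}$ and the contradiction. I expect the bookkeeping in the order 3 case — verifying that the two fixed 3-sets must intersect — to be the hardest part, and I would lean on the Fixed Vertex Property (all order 3 elements fix the same number of vertices) to pin that number down to exactly 3 and then exploit the transitive-ish action of $A_5$ on its Sylow 3-subgroups.
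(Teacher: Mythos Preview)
Your order-5 case is essentially the paper's argument, and in fact the paper does \emph{exactly} that argument uniformly for both odd orders: for $\psi$ of odd order $q\in\{3,5\}$ it takes an involution $\varphi$ with $\gen{\varphi,\psi}\cong D_q$ (the normalizer in $A_5$ of $\gen\psi$ is $D_q$ for both values of $q$), observes that $\varphi$ permutes the three fixed vertices of $\psi$ and hence fixes one of them, say $v$, then picks $H\cong A_4$ containing $\varphi$, invokes Lemma~\ref{involution} so that all of $H$ fixes $v$, notes $\psi\notin H$ (for $q=5$ trivially; for $q=3$ by Lemma~\ref{n2n3}), so $\gen{\psi,H}=G$ and every element of $G$ fixes $v$. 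Only then does it pick $g$ of order $q$ with $\gen{g,\psi}$ non-cyclic and run the shared-edge contradiction you describe.

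The gap in your plan is the order-3 case. You propose to force $\fix(\widehat\psi)\cap\fix(\widehat g)\neq\emptyset$ directly by ``a counting argument on the 3-vertex sets'' together with the Orbits Lemma applied to elements normalizing both cyclic subgroups. But two distinct Sylow $3$-subgroups of $A_5$ have normalizers (each $\cong S_3$) that intersect in at most a $\Z_2$, so there is no element that simultaneously normalizes both and lets the Orbits Lemma compare their fixed sets; and with ten Sylow $3$-subgroups each fixing $3$ vertices, nothing prevents $30$ pairwise disjoint fixed vertices when $m$ is large. So the counting route, as stated, does not close. The fix is exactly what you already did for $q=5$: use $\gen{\varphi,\psi}\cong D_3$ to get an involution fixing a vertex $v$, then propagate via an $A_4$ containing $\varphi$ to make $v$ a global fixed vertex \emph{before} choosing the second order-$3$ element $g$. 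Once every element of $G$ fixes $v$, the intersection $\fix(\widehat\psi)\cap\fix(\widehat g)$ automatically contains $v$, and your endgame goes through. In short, there is no need for a separate, harder order-3 argument --- your own order-5 argument already handles it.
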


\begin{proof}  Recall that the only even order elements of $A_5$ are involutions.   By Lemma 1, no involution of $G$ fixes more than 1 vertex.  Let $\psi$ be an element of $G$ of odd order $q$ and suppose that $\psi$ fixes 3 vertices. Now $G$ contains an involution $\varphi$ such that $\gen{ \varphi, \psi}\cong D_q$.   Thus for every vertex $x$ which is fixed by $\psi$, $\psi\varphi(x)=\varphi\psi^{-1}(x)=\varphi(x)$.  Hence $\varphi(x)$ is also fixed by $\psi$.  So $\varphi$ setwise fixes the set of fixed vertices of $\psi$.  Since $\psi$ fixes 3 vertices and $\varphi$ has order 2, $\varphi$ must fix one of these 3 vertices $v$. 

 Let $H\leq G$ such that $H\cong A_4$ and $H$ contains the involution $\varphi$. Then by Lemma \ref{involution}, every element of $H$ fixes $v$.  Since $\varphi$ fixes $v$ and $\psi$ fixes 3 vertices, it follows from Lemma \ref{n2n3} that $\psi\not\in H$. Therefore $\gen{ \psi,H}=G$,  because $A_5$ has no proper subgroup containing $A_4$ as a proper subgroup.  Hence every element of $G$ fixes $v$.  Now let $g\in G$ have order $q$ such that $\gen{ g, \psi}$ is not cyclic.  By the Fixed Vertex Property, $\fix(\widehat{g})$ and $\fix(\widehat{\psi})$ each contain 3 vertices and 3 edges.  Thus we can repeat the argument given in the proof of Lemma \ref{n2n3} to get a contradiction.\end{proof}

\medskip

\begin{lemma}\label{1fixed}
 Let $G\leq \Aut(K_m)$ which is isomorphic to $A_5$.  Suppose there is an embedding $\Gamma$ of $K_{m}$ in $S^3$ such that $G$ is induced on $\Gamma$ by an isomorphic subgroup $\widehat{G}\leq\mathrm{SO}(4)$.  If an element $\psi\in G$ with odd order $q$ fixes precisely one vertex $v$, then $v$ is fixed by every element of $G$ and no other vertex is fixed by any non-trivial element of $G$.  
\end{lemma}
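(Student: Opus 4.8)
The plan is to establish the two assertions in turn, the first by an argument in the spirit of the preceding lemmas and the second by a counting argument inside $\widehat G\le\mathrm{SO}(4)$. For the first assertion, note that since $\psi$ has odd order in $G\cong A_5$ we have $q\in\{3,5\}$. I would begin by choosing an involution $\varphi\in G$ with $\varphi\psi\varphi\inv=\psi\inv$ --- take $\varphi$ inside a subgroup of $G$ isomorphic to $D_q$ containing $\gen{\psi}$ --- and would arrange in addition that $\psi$ does not lie in some subgroup $H\cong A_4$ with $\varphi\in H$. When $q=5$ this last condition is automatic, since $A_4$ contains no element of order $5$; when $q=3$ it can be met because $\psi$ lies in only two of the five subgroups of $G$ isomorphic to $A_4$, while each of the three involutions of the copy of $S_3$ generated by $\psi$ and $\varphi$ lies in one of the remaining three, so any of them may be used for $\varphi$. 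Next one checks that $\varphi(v)=v$: from $\varphi\psi\varphi\inv=\psi\inv$ we get $\psi(\varphi(v))=\varphi(\psi\inv(v))=\varphi(v)$, so $\varphi(v)$ is a fixed vertex of $\psi$ and hence equals $v$. Now Lemma \ref{involution}, applied to $H$, shows that every element of $H$ fixes $v$; and since $A_4$ is a maximal subgroup of $A_5$ and $\psi\notin H$, we have $\gen{H,\psi}=G$, so every element of $G$ fixes $v$.

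For the second assertion, suppose that some non-trivial $\tau\in G$ fixes a vertex $w\neq v$. Every non-trivial element of $A_5$ has prime order, so $\tau$ has order $2$, $3$, or $5$, and $\tau$ also fixes $v$ by the first assertion. If $\tau$ has order $2$ then it fixes the two vertices $v$ and $w$, contradicting Lemma \ref{n2not2}. So $\tau$ has order $p\in\{3,5\}$; then by the Fixed Vertex Property together with Lemma \ref{nknot3} every subgroup of $G$ of order $p$ fixes exactly two vertices, one of which is $v$, and I write $f(P)$ for the other fixed vertex of the order-$p$ subgroup $P$.

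The crux is then a counting argument. Since the elements of $\widehat G\le\mathrm{SO}(4)$ fix $v$ and are linear, they also fix $-v$; moreover $-v$ is not a vertex of $\Gamma$, since otherwise an involution of $G$ would fix both $v$ and $-v$. Hence, for any vertex $x\neq v$, the isometries $\widehat g$ with $g\in\mathrm{Stab}_G(x)$ all fix the three distinct points $v$, $-v$, $x$, which lie on a common geodesic circle; being non-trivial finite-order elements of $\mathrm{SO}(4)$ with three fixed points on that circle, they must pointwise fix it, so $\mathrm{Stab}_G(x)$ is cyclic, and as a cyclic subgroup of $A_5$ it has order $1$, $2$, $3$, or $5$. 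It follows that a vertex $x\neq v$ satisfies $p\mid|\mathrm{Stab}_G(x)|$ exactly when $\mathrm{Stab}_G(x)$ is an order-$p$ subgroup $P$ and $x=f(P)$; moreover distinct order-$p$ subgroups yield distinct such vertices, since two of them would both lie in the single cyclic group $\mathrm{Stab}_G(f(P))$. Hence the set $X$ of such vertices satisfies $|X|=6$ if $p=5$ and $|X|=10$ if $p=3$ (the number of order-$p$ subgroups of $A_5$). On the other hand $X$ is $G$-invariant and contains $w$, so it contains the orbit $Gw$, whose size is $|G|/|\mathrm{Stab}_G(w)|=60/p$, that is $12$ or $20$; this contradicts the values of $|X|$ just found. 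I expect this last argument --- in particular the step passing from ``$\mathrm{Stab}_G(x)$ has three fixed points on a geodesic circle'' to ``$\mathrm{Stab}_G(x)$ is cyclic'', together with the exact count of $|X|$ --- to be the main obstacle; the first assertion and the order-$2$ case of the second are straightforward adaptations of arguments already given.
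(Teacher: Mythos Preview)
Your proof is correct. The first assertion follows the paper's argument closely; your extra care in the $q=3$ case (arranging that $\psi\notin H$) is not actually needed, since the paper simply observes that $A_4$ contains no subgroup isomorphic to $D_3$ or $D_5$, so once $\varphi\in H$ and $\gen{\varphi,\psi}\cong D_q$, automatically $\psi\notin H$ for \emph{any} choice of $\varphi$ inverting $\psi$ and any $H\cong A_4$ containing $\varphi$.

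For the second assertion you take a genuinely different route from the paper. Both arguments begin the same way: rule out order $2$ via Lemma~\ref{n2not2}, then for $p\in\{3,5\}$ show that each $\Z_p$ subgroup fixes exactly one vertex $f(P)$ besides $v$, and that distinct $P$'s give distinct $f(P)$'s because the stabilizer of any vertex $x\neq v$ is cyclic. The paper establishes this last point using the edge $\overline{vx}\subseteq\Gamma$, while you use the antipodal point $-v$ and linearity of $\mathrm{SO}(4)$; both work. The divergence is in the contradiction: the paper observes that the $n$ vertices $f(P)$ together with $v$ span an invariant embedded $K_{n+1}$ with $n+1\in\{7,11\}$, and since $\widehat G$ acts faithfully on it (no non-trivial element fixes more than $3$ vertices), the $D_2$ Lemma is violated because $n+1\equiv 3\pmod 4$. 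You instead observe that the set $X=\{f(P)\}$ is $G$-invariant of size $n\in\{6,10\}$, yet must contain the orbit $Gw$ of size $60/p\in\{12,20\}$. Your argument is self-contained and avoids invoking the $D_2$ Lemma from \cite{CFO}; the paper's is shorter once that lemma is in hand and has the pleasant feature of reducing to a statement about smaller complete graphs.
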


\begin{proof} There is an involution $\varphi\in G$ such that $\gen{ \varphi, \psi}\cong D_q$.  Now $\psi\varphi(v)=\varphi\psi^{-1}(v)=\varphi(v)$.  Since $v$ is the only vertex fixed by $\psi$, we must have $\varphi(v)=v$.  Now $G$ contains a subgroup $H\cong A_4$ containing $\varphi$.  By Lemma \ref{involution}, since $\varphi$ fixes $v$ every element of $H$ fixes $v$.  Since $A_4$ does not contain $D_3$ or $D_5$, $\psi\not \in H$.  Hence as in the proof of Lemma \ref{nknot3}, $\gen{ \psi, H}=G$.  Thus every element of $G$ fixes $v$.  Every involution in $G$ is an element of a subgroup isomorphic to $A_4$.  Thus by Lemma \ref{n2not2}, $v$ is the only vertex which is fixed by any involution in $G$.

Let $\beta\in G$ be of order $p=3$ or 5.  Suppose $\beta$ fixes some vertex $w\not =v$.  Thus all of the elements in $\gen{ \beta}\cong \Z_p$ fix $v$ and $w$.  Let $n$ denote the number of subgroups of $G$ that are isomorphic to $\Z_p$.  Thus $n=6$ or $n=10$ according to whether $p=5$ or $p=3$ respectively.  By the Fixed Vertex Property, all of the subgroups isomorphic to $\Z_p$ also fix 2 vertices.   If $g\in G$ fixes $w$, then $g$ pointwise fixes the edge $\overline{vw}$ and hence $\gen{ g, \beta}$ is cyclic.  It follows that each of the $n$ subgroups isomorphic to $\Z_p$ fixes a distinct vertex in addition to $v$. These $n$ vertices together with $v$ span a subgraph $\Lambda\subseteq \Gamma$ which is an embedding of $K_{n+1}$ such that $\Lambda$ is setwise invariant under $\widehat{G}$ and $\widehat{G}$ induces an isomorphic group action on $\Lambda$.  However, $n+1=7$ or 11.  Since $G\cong A_5$ contains a subgroup isomorphic to $D_2$, this contradicts the $D_2 $ Lemma.  Thus $v$ is the only vertex which is fixed by any order $p$ element of $G$.\end{proof}
\bigskip

 The following general result may be well known.  However, since we could not find a reference, we include an elementary proof here.  Observe that in contrast with Lemma \ref{L:dodecahedron}, if $\widehat{G}$ acts on $S^3$ as the orientation preserving isometries of a regular 4-simplex then the order 5 elements are glide rotations.

 \begin{lemma} \label{L:dodecahedron}
Suppose that  $\widehat{G}\leq \mathrm{SO}(4)$ such that $\widehat{G}\cong A_5$ and every order 5 element of $\widehat{G}$ is a rotation of $S^3$.  Then $\widehat{G}$ induces the group of rotations of a regular solid dodecahedron.
\end{lemma}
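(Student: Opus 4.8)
The plan is to show that the subgroup $\widehat{G}\cong A_5$, acting on $S^3$ with all order $5$ elements being rotations, preserves a round $2$-sphere, and that its action on that sphere is conjugate to the standard action of $A_5$ as rotations of a dodecahedron inscribed in $S^2$. First I would recall the structure of finite subgroups of $\mathrm{SO}(4)$: every element of $\widehat{G}$ is either a rotation (fixed point set a geodesic circle) or a fixed-point-free glide rotation, and $\mathrm{SO}(4)$ acts on $S^3\subseteq \C^2$ via pairs of rotations, so that an element of order $n$ acts with rotation angles $(2\pi p/n, 2\pi q/n)$ on the two orthogonal invariant $2$-planes; it is a rotation of $S^3$ precisely when one of $p,q$ is $\equiv 0 \pmod n$. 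The hypothesis is that the six subgroups of order $5$ in $\widehat{G}$ each fix a geodesic circle pointwise.

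Next I would pin down the geometry forced by this hypothesis. Fix an order $5$ rotation $\widehat{\psi}$ with fixed circle $C_\psi$; its rotation on the orthogonal invariant circle is by $2\pi k/5$ for some $k\not\equiv 0$. I would then bring in an order $2$ element $\widehat{\varphi}$ normalizing $\gen{\widehat{\psi}}$ (so $\gen{\widehat\varphi,\widehat\psi}\cong D_5$): by the Orbits Lemma $\widehat{\varphi}$ sends $C_\psi$ to a geodesic circle fixed by $\widehat{\psi}$, and since $C_\psi$ is the unique such circle (the other invariant circle is not fixed pointwise), $\widehat{\varphi}$ preserves $C_\psi$ and acts on it as a reflection; similarly for order $3$ elements. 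Running over the whole group, I want to conclude that all the fixed circles $C_\psi$ of the order $5$ subgroups, together with the fixed circles of the order $3$ subgroups, lie on a common geodesic $2$-sphere $\Sigma$ that is $\widehat G$-invariant. The cleanest way is: since $\widehat G$ is not cyclic it cannot fix any circle pointwise, so there is a $\widehat G$-invariant geodesic $2$-sphere $\Sigma$ splitting $S^3$ into two balls (a finite group action on $S^3$ with a fixed circle is standard, and here one extracts the perpendicular $\Sigma$ from the common structure of the rotation axes), and $\widehat G$ acts on $\Sigma\cong S^2$ faithfully by orientation-preserving isometries.

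Finally, a faithful action of $A_5$ on $S^2$ by orientation-preserving isometries is, up to conjugacy in $O(3)$, the unique rotation action — the rotation group of the icosahedron/dodecahedron — because the even Euclidean point groups in $\mathrm{SO}(3)$ are cyclic, dihedral, $A_4$, $S_4$, $A_5$, and only the last is isomorphic to $A_5$. Hence $\widehat G$ restricted to $\Sigma$ is conjugate to the rotation group of a regular spherical dodecahedron, and this extends radially to the rotation group of a solid dodecahedron in $S^3$. I expect the main obstacle to be the second step: rigorously producing the invariant geodesic $2$-sphere $\Sigma$ from the hypothesis that the order $5$ (and order $3$) elements are rotations, i.e. ruling out configurations where the various rotation axes are "linked" rather than "coplanar." This should follow by a careful bookkeeping of how the $D_5$- and $D_3$-subgroups act on the invariant $2$-planes of a single order $5$ element, showing that the second invariant circle of $\widehat\psi$ must be setwise preserved by enough of $\widehat G$ to force everything onto one $\Sigma$; alternatively one can invoke the classification of finite subgroups of $\mathrm{SO}(4)$ to list the possible $A_5$'s directly and check which one has all order $5$ elements rotational.
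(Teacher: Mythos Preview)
Your overall strategy --- find a $\widehat G$-invariant round $2$-sphere, then quote the classification of finite subgroups of $\mathrm{SO}(3)$ --- is sound, and it is essentially equivalent to what the paper aims for (an invariant geodesic $2$-sphere in $S^3$ is the same data as a pair of antipodal fixed points, i.e.\ a ``center'' for the dodecahedron). But the crucial step, producing that invariant sphere, is exactly where your argument has a hole, and you know it: the sentence ``since $\widehat G$ is not cyclic it cannot fix any circle pointwise, so there is a $\widehat G$-invariant geodesic $2$-sphere $\Sigma$'' is a non sequitur. Non-cyclicity only tells you there is no globally fixed circle; it does not manufacture an invariant sphere. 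Your parenthetical about ``extracting the perpendicular $\Sigma$ from the common structure of the rotation axes'' is the whole content of the lemma, and your later ``careful bookkeeping of how the $D_5$- and $D_3$-subgroups act on the invariant $2$-planes'' is a promise, not an argument. (A small side remark: the Orbits Lemma as stated is for commuting permutations, so it does not literally apply to $\widehat\varphi$ and $\widehat\psi$ in $D_5$; the correct reason $\widehat\varphi$ preserves $C_\psi$ is that $\widehat\varphi\widehat\psi\widehat\varphi^{-1}=\widehat\psi^{-1}$ has the same fixed set as $\widehat\psi$.)

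The paper fills this gap with a concrete construction rather than an appeal to classification. For each $\Z_5$-subgroup $J_i$ with axis $L_i$, pick an involution $\varphi_i$ with $\gen{J_i,\varphi_i}\cong D_5$; then $\varphi_i$ inverts $L_i$ and fixes two points $p_i,q_i\in L_i$, and in fact every involution in that $D_5$ (they are all $J_i$-conjugate) fixes the same two points. Let $S_i$ be the geodesic $2$-sphere through $p_i,q_i$ orthogonal to $L_i$; it is $H_i$-invariant. Now the combinatorics of $A_5$ says each involution in $H_1$ lies in exactly one other $H_i$, which forces $p_1\in S_i$ for every $i$; since $\{S_1,\dots,S_6\}$ is $\widehat G$-invariant, the entire $\widehat G$-orbit $P$ of $p_1$ lies in every $S_i$. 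Either $p_1$ is a global fixed point (done), or $|P|\ge 5$ and any two distinct $S_i$ would meet in a geodesic circle $C\supseteq P$ that is then $\widehat G$-invariant; but $A_5$ cannot act faithfully on a pair of linked circles preserving each (it is not a product of cyclic/dihedral groups), a contradiction. Hence all $S_i$ coincide, all axes $L_i$ are perpendicular to this common sphere and therefore pass through a common center $x$, and $\widehat G=\gen{J_1,J_2}$ fixes $x$. This is precisely the ``ruling out linked axes'' step you flagged, carried out by hand. Your fallback of quoting the Du~Val/Threlfall--Seifert classification of finite subgroups of $\mathrm{SO}(4)$ would also work, but the paper explicitly wants an elementary, self-contained argument.
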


\begin{proof}  The group $\widehat{G}$ contains subgroups $J_1$, \dots, $J_6$ which are isomorphic to $\mathbb{Z}_5$ and involutions $\varphi_1$, \dots, $\varphi_6$ such that for each $i$, $H_i=\gen{ J_i,\varphi_i}\cong D_5$.   Now since every order 5 element of $\widehat{G}$ is a rotation of $S^3$, for each $i$ there is a geodesic circle $L_i$ which is pointwise fixed by every element of $J_i$.  Furthermore, because $H_i\cong D_5$, the circle $L_i$ must be inverted by the involution $\varphi_i$.  Hence there are points $p_i$ and $q_i$ on $L_i$ which are fixed by $\varphi_i$.  Now every involution in $H_i\cong D_5$ is conjugate to $\varphi_i$ by an element of $J_i$.  Hence every involution in $H_i$ also fixes both $p_i$ and $q_i$.   For each $i$, let $S_i$ denote the geodesic sphere $S_i$ which meets the circle $L_i$ orthogonally in the points $p_i$ and $q_i$.  Now $S_i$ is setwise invariant under every element of $H_i$.  

By analyzing the structure of $A_5$, we see that each involution in $H_1$ is also contained in precisely one of the groups $H_2$, \dots, $H_6$.   Thus for each $i\not =1$, the 2 points which are fixed by the involution in $H_1\cap H_i$ are contained in $S_1\cap S_i$.  Since $p_1$ is fixed by every involution in $H_1$, it follows that $p_1$ is contained in every $S_i$.  Observe that the set of geodesic spheres $\{S_1, \dots, S_6\}$ is setwise invariant under $\widehat{G}$.  Since $p_1$ is in every $S_i$, this implies that the orbit $P$ of $p_1$ is contained in every $S_i$.  

If $p_1$ is fixed by every element of $\widehat{G}$, then $\widehat{G}$ induces the group of rotations of a regular solid dodecahedron centered at $p_1$.  Thus we assume that $p_1$ is not fixed by every element of $\widehat{G}$.  Since $\widehat{G}$ can be generated by elements of order 5, it follows that some order 5 element of $\widehat{G}$ does not fix $p_i$.  The orbit of $p_1$ under that  element must contain at least 5 elements, and hence $|P|\geq 5$.   Suppose that some $S_i\not =S_1$.  Then $S_1\cap S_i$ consists of a geodesic circle $C$ containing the set $P$.  Since $|P|>2$, the circle $C$ is uniquely determined by $P$.   

Now $C$ must be setwise invariant under $\widehat{G}$ since $P$ is.  Thus the core $D$ of the open solid torus $S^3-C$ is also setwise invariant under $\widehat{G}$.  Since a pair of circles cannot be pointwise fixed by a non-trivial orientation preserving isometry of $S^3$, $\widehat{G}$ induces a faithful action of $C\cup D$ taking each circle to itself.  But the only finite groups that can act faithfully on a circle are cyclic or dihedral, and $A_5$ is not the product of two such groups.  Thus every $S_i=S_1$.  

Recall that for each $i$, the geodesic circle $L_i$ is orthogonal to the sphere $S_i$ and is pointwise fixed by every element of $J_i$.  Since all of the geodesic circles $L_1$, \dots, $L_6$ are orthogonal to the single sphere $S_i=S_1$, they must all meet at a point $x$ in a ball bounded by $S_1$.  Now $\widehat{G}=\gen{ J_1, J_2}$, and every element of $J_1$ and $J_2$ fixes $x$.  Thus $\widehat{G}$ fixes the point $x$.  Hence again $\widehat{G}$ induces the group of rotations of a solid dodecahedron centered at the point $x$.\end{proof}

\bigskip

 Suppose that $G$ is a group acting faithfully on $K_m$.  Let  $V$ denote the vertices of $K_m$ and let $|\mathrm{fix}(g|V)|$ denote the number of vertices of $K_m$ which are fixed by $g\in G$.  Burnside's Lemma \cite{Bu} gives us the following equation:

$${\rm \#\  vertex\  orbits} = \frac{1}{|G|}\sum_{g \in G}{|\mathrm{fix}(g|V)|}$$

\bigskip

 We shall use the fact that the left side of this equation is an integer to prove the necessity of our conditions for $K_m$ to have an embedding $\Gamma$ such that $G=\TSG(\Gamma)$ is isomorphic to $A_4$ or $A_5$. By the Fixed Vertex Property, all elements of the same order fix the same number of vertices of $\Gamma$.  So we will use $n_k$ to denote the number of fixed vertices of an element of $G$ of order $k$.  Observe that $n_1$ is always equal to $m$.
 
 \medskip
 
 \begin{thm} \label{A4necessary} If a complete graph $K_m$ has an embedding $\Gamma$ in $S^3$ such that $\TSG(\Gamma)\cong A_4$, then $m\equiv 0$, $1$, $4$, $5$, $8$ $\pmod {12}$.\end{thm}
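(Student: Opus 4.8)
The plan is to apply the Isometry Theorem to reduce to the setting of Lemmas~\ref{n2not2}--\ref{n2n3}, use those lemmas to list the possible numbers of fixed vertices of elements of each order, and then apply Burnside's Lemma to obtain a congruence on $m$ modulo $|A_4|=12$.

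First I would take an embedding $\Omega$ of $K_m$ with $\TSG(\Omega)\cong A_4$ and invoke the Isometry Theorem to re-embed $K_m$ as $\Gamma$ so that the group $G\cong A_4$ (now a subgroup of $\TSG(\Gamma)$) is induced on $\Gamma$ by an isomorphic subgroup $\widehat G\leq \mathrm{SO}(4)$. This places us in the hypotheses of Lemmas~\ref{n2not2}, \ref{involution}, and \ref{n2n3}. Following the notation set up just before the theorem, write $n_1=m$ and let $n_2$, $n_3$ be the (well-defined, by the Fixed Vertex Property) numbers of vertices fixed by an element of $G$ of order $2$, $3$ respectively.

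Next I would enumerate the admissible pairs $(n_2,n_3)$. Since $\widehat G\leq\mathrm{SO}(4)$, no nontrivial element of $G$ fixes more than $3$ vertices, so $n_3\in\{0,1,2,3\}$, and by Lemma~\ref{n2not2} we have $n_2\in\{0,1\}$. If $n_2=1$, then Lemma~\ref{involution} forces the vertex fixed by an involution to be fixed by every element of $G$, so $n_3\geq 1$, while Lemma~\ref{n2n3} rules out any element fixing $3$ vertices; thus $n_3\in\{1,2\}$. If $n_2=0$, all of $n_3\in\{0,1,2,3\}$ remain possible. Applying Burnside's Lemma to the action of $G$ on the $m$ vertices of $K_m$ --- recalling that $A_4$ has one identity, three elements of order $2$, and eight of order $3$ --- gives
\[
\frac{1}{12}\bigl(m+3n_2+8n_3\bigr)\in\Z,\qquad\text{i.e.}\qquad m\equiv -3n_2-8n_3\pmod{12}.
\]
Substituting the pairs $(n_2,n_3)=(0,0),(0,1),(0,2),(0,3)$ yields $m\equiv 0,4,8,0$, and $(1,1),(1,2)$ yield $m\equiv 1,5$; hence $m\equiv 0,1,4,5,8\pmod{12}$, as claimed.

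The hard part of this result has already been carried out in Lemmas~\ref{n2not2}--\ref{n2n3} and the Isometry Theorem; what remains is essentially bookkeeping. The only point requiring care is to be sure that the list of pairs $(n_2,n_3)$ is exhaustive --- in particular, not to overlook that $n_2=1$ forces both $n_3\geq 1$ (via Lemma~\ref{involution}) and $n_3\leq 2$ (via Lemma~\ref{n2n3}), and that the value $n_3=3$ can occur only when $n_2=0$.
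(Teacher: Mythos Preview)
Your proposal is correct and follows essentially the same approach as the paper: apply the Isometry Theorem, use Lemmas~\ref{n2not2}--\ref{n2n3} to constrain the pair $(n_2,n_3)$, and then read off the congruence on $m$ from Burnside's Lemma. The paper organizes the six admissible pairs into a five-row table (merging $(0,0)$ and $(0,3)$, which both give $m\equiv 0$), but the content is identical to what you wrote.
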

 
 \begin{proof} Let $G=\TSG(\Gamma)\cong A_4$.  By applying the Isometry Theorem, we obtain a re-embedding $\Lambda$ of $K_m$ such that $G$ is induced on $\Lambda$ by an isomorphic subgroup $\widehat{G} \leq \mathrm{SO}(4)$.  Thus we can apply our lemmas.   Note that $\vert A_4 \vert = 12$, and $A_4$ contains 3 order 2 elements and 8 order 3 elements.  Thus Burnside's Lemma tells us that $\frac{1}{12}(m + 3n_2+8n_3)$ is an integer.

By Lemma 1, we know that $n_2$ = 0 or 1, and by Lemma 3 we know that if $n_2=1$ then $n_3\not= 3$.  Also, by Lemma 2, if $n_3 = 0$, then $n_2 = 0$.  So there are 5 cases, summarized in the table below.  In each case, the value of $m \pmod{12}$ is determined by knowing that $\frac{1}{12}(m + 3n_2+8n_3)$ is an integer.
\bigskip

\begin{center}
\begin{tabular}{|c|c|c|}
	\hline
	$n_2$ & $n_3$ & $m \pmod{12}$ \\ \hline
	0 & 0 or 3 & 0 \\
	0 & 1 & 4 \\
	0 & 2 & 8 \\
	1 & 1 & 1 \\
	1 & 2 & 5 \\ \hline
\end{tabular}
\end{center}\end{proof}

 \bigskip

 \begin{thm} \label{A5necessary} If a complete graph $K_m$ has an embedding $\Gamma $ in $S^3$ such that $\TSG(\Gamma)\cong A_5$, then $m\equiv 0$, $1$, $5$, $20 \pmod {60}$.\end{thm}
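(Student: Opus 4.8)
The plan is to follow the template of the proof of Theorem~\ref{A4necessary}. First apply the Isometry Theorem to re-embed $K_m$ as $\Lambda$ so that $G=\TSG(\Gamma)\cong A_5$ is induced by an isomorphic subgroup $\widehat G\leq \mathrm{SO}(4)$, which lets us invoke all of the preceding lemmas. Since $|A_5|=60$ and $A_5$ has $15$ involutions, $20$ elements of order $3$, and $24$ elements of order $5$, Burnside's Lemma tells us that $\frac{1}{60}(m+15n_2+20n_3+24n_5)$ is an integer, so the theorem reduces to determining which triples $(n_2,n_3,n_5)$ can occur.

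By Lemma~\ref{n2not2} we have $n_2\in\{0,1\}$, and by Lemma~\ref{nknot3} we have $n_3,n_5\in\{0,1,2\}$. Lemma~\ref{1fixed} shows that if $n_3=1$ or $n_5=1$, then the unique fixed vertex is fixed by all of $G$ and is the only vertex fixed by any non-trivial element of $G$, which forces $(n_2,n_3,n_5)=(1,1,1)$. If $n_2=1$, then some involution $\varphi$ fixes a vertex $v$; since every involution of $A_5$ lies in a subgroup $H\cong A_4$ (which is induced by $\widehat H\leq \widehat G\leq \mathrm{SO}(4)$), Lemma~\ref{involution} applied to $H$ shows that every element of $H$, and in particular every order $3$ element of $H$, fixes $v$, so $n_3\geq 1$; combined with the previous sentence this gives $(n_2,n_3,n_5)=(1,1,1)$ or $n_3=2$. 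Granting for the moment that $n_5\neq 2$, the only surviving triples are $(0,0,0)$, $(1,1,1)$, $(0,2,0)$, and $(1,2,0)$, and substituting each into Burnside's equation and requiring integrality yields $m\equiv 0$, $1$, $20$, $5\pmod{60}$ respectively, i.e. exactly the claimed list.

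The step I expect to require the most care is ruling out $n_5=2$. Suppose some order $5$ element $\psi$ fixes two vertices $v$ and $w$. Then $\widehat\psi$ pointwise fixes the edge $\overline{vw}$, so its fixed point set is a geodesic circle and $\widehat\psi$ is a rotation of $S^3$; since all cyclic subgroups of $\widehat G$ of order $5$ are conjugate, every order $5$ element of $\widehat G$ is a rotation, so by Lemma~\ref{L:dodecahedron} $\widehat G$ induces the group of rotations of a regular dodecahedron. In particular there are two antipodal points $N,S$ fixed by all of $\widehat G$, and the fixed circle of each of the six order $5$ subgroups $J_1,\dots,J_6$ passes through $N$ and $S$. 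By the Fixed Vertex Property each $J_i$ fixes exactly two vertices, say $v_i,w_i$, fixed by every generator $\psi_i$ of $J_i$. An involution $\varphi_i$ normalizing $J_i$ satisfies $\varphi_i\psi_i\varphi_i\inv=\psi_i\inv$, so $\varphi_i$ sends the fixed vertices of $\psi_i$ to themselves; it cannot fix one of $v_i,w_i$ without fixing both, contradicting Lemma~\ref{n2not2}, so $\varphi_i$ interchanges $v_i$ and $w_i$; since $\varphi_i$ fixes $N$ and $S$, this also shows that no $v_i$ or $w_i$ is $N$ or $S$. Moreover the twelve vertices $v_1,w_1,\dots,v_6,w_6$ are distinct, since if two coincided that vertex would be fixed by two distinct order $5$ subgroups, hence by the $A_5$ they generate, hence be one of $N,S$, which is impossible. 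Now $\varphi_i$ acts on the fixed circle of $J_i$ as a reflection whose fixed points are exactly $N$ and $S$, and it interchanges $v_i$ and $w_i$, so $v_i$ and $w_i$ are separated on that circle by $N$ and $S$; since $\widehat\psi_i$ pointwise fixes the edge $\overline{v_iw_i}$, that edge is an arc of the circle and therefore passes through $N$ or through $S$. We thus obtain six pairwise vertex-disjoint edges, each passing through one of the two points $N,S$, so by the pigeonhole principle three of them pass through the same point — contradicting the fact that distinct edges of an embedded graph meet only at common vertices. Hence $n_5\neq 2$, and the proof is complete.
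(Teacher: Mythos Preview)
Your proof is correct and follows essentially the same approach as the paper's: apply the Isometry Theorem, invoke Burnside's Lemma with the element counts $15,20,24$, use Lemmas~\ref{n2not2}--\ref{1fixed} to cut the possible triples $(n_2,n_3,n_5)$ down to four, and use Lemma~\ref{L:dodecahedron} to rule out $n_5=2$.

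The only noteworthy difference is in how you finish off $n_5=2$. The paper argues with a single edge $e=\overline{vw}\subseteq\fix(\widehat\psi)$: the involution $\varphi$ inverting $\fix(\widehat\psi)$ must invert $e$, so the midpoint of $e$ is one of the two global fixed points of $\widehat G$; since $A_5$ is simple (hence not dihedral), $e$ is not setwise invariant under all of $\widehat G$, so some $g(e)\neq e$ also passes through that same global fixed point, giving two edges meeting in an interior point. You instead track all six order~$5$ subgroups, show their twelve fixed vertices are distinct, and pigeonhole the six resulting edges into the two global fixed points. Both arguments are valid; the paper's is a bit shorter because it only needs one edge plus its orbit, while yours establishes more structure (distinctness of the twelve vertices) than is strictly necessary---indeed, two distinct edges through the same interior point already suffice for the contradiction, so invoking ``three'' via pigeonhole is a slight overkill.
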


 \begin{proof} Let $G=\TSG(\Gamma)\cong A_5$.  By applying the Isometry Theorem, we obtain a re-embedding $\Lambda$ of $K_m$ such that $G$ is induced on $\Lambda$ by an isomorphic subgroup $\widehat{G} \leq \mathrm{SO}(4)$.  Note that $|A_5|=60$, and $A_5$ contains 15 elements of order 2, 20 elements of order 3, and 24 elements of order 5.  Thus Burnside's Lemma tells us that $ \frac{1}{60}(m+15n_2+20n_3+24n_5)$ is an integer.

By Lemma 4, for every $k>1$, $n_k<3$.  Every element of $G$ of order 2 or 3 is contained in some subgroup isomorphic to $A_4$.  Thus as in the proof of Theorem 1, we see that $n_2$ = 0 or 1, and if $n_3 = 0$ then $n_2 = 0$.  Also, by Lemma 5, if either $n_3 = 1$ or $n_5 = 1$, then all of $n_2$, $n_3$ and $n_5$ are 1. 

Suppose that $n_5=2$.  Then each order 5 element of $\widehat{G}$ must be a rotation.  Let $\widehat{\psi}$, $\widehat{\varphi}\in\widehat{G}$ such that $\widehat{\psi}$ has order 5, $\widehat{\varphi}$ has order 2, and $\gen{ \widehat{\psi},\widehat{\varphi}}\cong D_5$. Then there is a circle which is fixed pointwise by $\widehat{\psi}$ and inverted by $\widehat{\varphi}$.  Thus $\fix(\widehat{\varphi})$ intersects $\fix(\widehat{\psi})$ in 2 precisely points.  By Lemma \ref{L:dodecahedron}, we know that $\widehat{G}$ induces the group of rotations on a solid dodecahedron. Hence the fixed point sets of all of the elements of $\widehat{G}$ meet in two points, which are the points $\fix(\widehat{\varphi})\cap\fix(\widehat{\psi})$.  Now since $n_5=2$, $\fix(\widehat{\psi})$ contains precisely 2 vertices, and hence an edge $e$.  Thus $e$ must be inverted by $\widehat{\varphi}$. It follows that the midpoint of $e$ is  one of the two fixed points of $\widehat{G}$. Since $G$ is not a dihedral group we know that $e$ is not setwise invariant under every element of $\widehat{G}$.  Thus there are other edges in the orbit of $e$ which intersect $e$ in its midpoint.  Since two edges cannot intersect in their interiors, we conclude that $n_5\not =2$.

 There are 4 cases summarized in the table below.

\bigskip

\begin{center}
\begin{tabular}{|c|c|c|c|}
	\hline
	$n_2$ & $n_3$ & $n_5$ & $m \pmod{60}$ \\ \hline
	0 & 0 & 0 & 0 \\
	0 & 2 & 0 & 20 \\
	1 & 1 & 1 & 1 \\
	1 & 2 & 0 & 5 \\ \hline
\end{tabular}
\end{center}\end{proof}
\bigskip

\begin{thm}\label{S4necessary} If a complete graph $K_m$ has an embedding $\Gamma$ in $S^3$ such that $\TSG(\Gamma)\cong S_4$, then $m\equiv 0$, $4$, $8$, $12$, $20\pmod {24}$.
\end{thm}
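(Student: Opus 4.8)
The plan is to follow the pattern of Theorems~\ref{A4necessary} and~\ref{A5necessary}. First I would apply the Isometry Theorem to re-embed $K_m$ as $\Lambda$ so that $G=\TSG(\Lambda)\cong S_4$ is induced by an isomorphic subgroup $\widehat G\leq\mathrm{SO}(4)$, and let $\widehat A\leq\widehat G$ be the copy of $A_4$. Using the Fixed Vertex Property, write $n_2'$, $n_2$, $n_3$, $n_4$ for the number of vertices fixed by a transposition, a double transposition, a $3$-cycle and a $4$-cycle of $G$, respectively. Since $S_4$ has $6$ transpositions, $3$ double transpositions, $8$ three-cycles and $6$ four-cycles, Burnside's Lemma makes $\frac{1}{24}\bigl(m+6n_2'+3n_2+8n_3+6n_4\bigr)$ an integer; this congruence must be combined with constraints on the $n_k$.

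Next I would collect those constraints. Lemma~\ref{n2not2}, applied to $A_4\leq S_4$, gives $n_2\in\{0,1\}$; since the square of a $4$-cycle is a double transposition, $n_4\leq n_2$; and $n_2'\leq2$, $n_3\leq3$ by the remarks before Lemma~\ref{n2not2}. As $A_4\trianglelefteq S_4$, the set of vertices fixed by all of $A_4$ is setwise invariant under $G$, has at most one element by Lemma~\ref{n2not2}, and (by Lemma~\ref{involution}) is nonempty whenever $n_2=1$. So there is a dichotomy: either \emph{(Case I)} $n_2=0$, no vertex is fixed by all of $G$, and hence $n_4=0$; or \emph{(Case II)} $n_2=1$, some vertex $v$ is fixed by every element of $G$, and then $n_4=1$, $n_2'\geq1$, $n_3\geq1$, and $n_3\leq2$ by Lemma~\ref{n2n3}. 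I would also record two facts used to pin down $n_2'$: if $n_3=1$, then for a $3$-cycle $\psi$ with fixed vertex $v$ and a transposition $\tau$ with $\langle\psi,\tau\rangle\cong S_3$ one has $\psi(\tau(v))=\tau(\psi^{-1}(v))=\tau(v)$, so $\tau(v)=v$ and $n_2'\geq1$; and if $n_2=0$ but $n_2'\geq1$, then applying the Orbits Lemma to a transposition and the double transposition obtained by multiplying it by a disjoint transposition rules out a unique fixed vertex, so $n_2'=2$.

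Case I is then routine: $n_2=n_4=0$, and the pair $(n_2',n_3)$ must be one of $(0,0),(0,2),(0,3),(2,0),(2,1),(2,2),(2,3)$ (with $n_3\neq1$ when $n_2'=0$). Substituting into $m+6n_2'+8n_3\equiv0\pmod{24}$ gives $m\equiv0,8,0,12,4,20,12\pmod{24}$ respectively, i.e.\ $m\equiv0,4,8,12,20\pmod{24}$ in all cases.

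The heart of the proof — and the step I expect to be hardest — is showing that Case II cannot occur, for if it did the Burnside congruence would force $m\equiv1$ or $17\pmod{24}$, contradicting the statement. In Case II, $\widehat G$ fixes $v$, so after conjugation $\widehat G$ lies in the $\mathrm{SO}(3)$ stabilizing $v$ and acts faithfully on the link $S^2$ of $v$ as the rotation group of a cube; in particular every point of the link has cyclic stabilizer of order at most $4$. If $n_3=2$, pick a vertex $w\neq v$ fixed by a subgroup $J\cong\Z_3$ of $G$; since the $J$-fixed vertices are exactly $v$ and $w$, the normalizer $N_G(J)\cong S_3$ permutes $\{v,w\}$ and fixes $v$, hence fixes $w$, so $S_3\cong N_G(J)\leq\mathrm{Stab}_G(w)$; but any element fixing $v$ and $w$ fixes the direction of the edge $\overline{vw}$ at $v$, so $\mathrm{Stab}_G(w)$ embeds into a point-stabilizer of the link action, which is cyclic — impossible. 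Hence $n_3=1$, which forces $m\equiv1\pmod{24}$, with every nontrivial element of $G$ fixing only $v$ and $G$ acting freely on the remaining $m-1\equiv0\pmod{24}$ vertices. Ruling out such an $S_4$-symmetric embedding of $K_{1+24k}$ (one global fixed vertex, every other vertex and every other edge lying in a free orbit) is the remaining obstacle; since Burnside's Lemma and the orbit structure are by themselves consistent with this configuration, I expect this step to require the Edge Embedding Lemma — the equivariant embedding obstruction used elsewhere in this paper — rather than a counting argument. Once Case II is excluded, Case I gives exactly the congruences $m\equiv0,4,8,12,20\pmod{24}$.
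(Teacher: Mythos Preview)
Your Case I analysis is fine and does give exactly $m\equiv0,4,8,12,20\pmod{24}$, but the proof has a genuine gap: you do not rule out Case II, and the tool you propose for doing so is the wrong one. The Edge Embedding Lemma is a \emph{construction} result --- it is used in Sections~4--6 to build invariant embeddings, never to obstruct them --- so it cannot supply the missing step.

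The missing idea, which the paper uses at the very start of its proof, is a short geometric argument showing that every order-$4$ element of $\widehat G$ is fixed-point-free in $S^3$. Suppose some order-$4$ element $\widehat g\in\widehat G$ has $\fix(\widehat g)\neq\emptyset$; then $\fix(\widehat g)$ is a geodesic circle and, since $\widehat g^2$ has the same axis, $\fix(\widehat g^2)=\fix(\widehat g)$. Pick any $4$-cycle $(v_1,v_2,v_3,v_4)$ of vertices under $g$ (one exists since $m\geq4$). Then $g^2$ inverts the edges $\overline{v_1v_3}$ and $\overline{v_2v_4}$, so $\fix(\widehat g^2)$ meets the interior of each. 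Hence $\widehat g$ fixes an interior point of $\overline{v_1v_3}$; but $\widehat g$ carries $\overline{v_1v_3}$ to the disjoint edge $\overline{v_2v_4}$, a contradiction. Thus $n_4=0$ always. In your framework this kills Case II immediately, since you showed Case II forces $n_4=1$. (The paper then uses $n_4=0$ more efficiently than a full case enumeration: $n_4=0$ gives $m\not\equiv1\pmod4$, which combined with Theorem~\ref{A4necessary} yields $m\equiv0\pmod4$, leaving only $m\equiv16\pmod{24}$ to be excluded by a short Burnside-plus-$D_3$ argument.)

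Two minor points: the Isometry Theorem only guarantees $G\leq\TSG(\Lambda)$, not equality, though this does not affect your argument; and your case $(n_2',n_3)=(0,3)$ in Case I is actually impossible (a transposition normalizing a $3$-cycle would have to permute its three fixed vertices and hence fix one), but since it yields $m\equiv0$ this over-counting is harmless.
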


 \begin{proof} Let $G=\TSG(\Gamma)\cong S_4$.  By applying the Isometry Theorem, we obtain a re-embedding $\Lambda$ of $K_m$ such that $G$ is induced on $\Lambda$ by an isomorphic subgroup $\widehat{G} \leq \mathrm{SO}(4)$.  Suppose that some order 4 element $\widehat{g}\in\widehat{G}$ has non-empty fixed point set.  Then $\fix(\widehat{g})\cong S^1$.  Thus $\fix (\widehat{g}^2)=\fix(\widehat{g})$.  Let $(v_1,v_2,v_3,v_4)$ be a 4-cycle of vertices under $g$.  Then $g^2$ inverts the edges $\overline{v_1v_3}$ and $\overline{v_2v_4}$.  Thus  $\fix (\widehat{g}^2)$ intersects both $\overline{v_1v_3}$ and $\overline{v_2v_4}$.  Hence $\widehat{g}$ fixes a point on each of $\overline{v_1v_3}$ and $\overline{v_2v_4}$.  But this is impossible since $(v_1,v_2,v_3,v_4)$ is induced by $g$.  Thus every order 4 element of $\widehat{G}$ has empty fixed point set.  In particular, no order 4 element of $G$ fixes any vertices of $\Gamma$.  Thus $m\not \equiv 1\pmod 4$.  Since $A_4\leq S_4$, by Theorem \ref{A4necessary}, $m\equiv 0$, $1\pmod 4$.  It follows that $m\equiv 0\pmod{4}$.

 Suppose that $m=24n+16$ for some $n$.  The group $S_4$ has 3 elements of order 2 which are contained in $A_{4}$, 6 elements of order 2 which are not contained in $A_4$, 8 elements of order 3, and 6 elements of order 4.  By the Fixed Vertex Property, each of the elements of any one of these types fixes the same number of vertices.  So according to Burnside's Lemma $\frac{1}{24}((24n+16) + 3n_2 + 6n_2' + 8n_3 + 6n_4)$ is an integer, where $n_2$ denotes the number of fixed vertices of elements of order 2 which are contained in $A_4$ and $n_2'$ denotes  the number of fixed vertices of elements of order 2 which are not contained in $A_4$.  
 
 We saw above that $n_4=0$.  By Lemma \ref{n2not2}, $n_2=0$ or 1.  However, since $n_2$ is the only term with an odd coefficient, we cannot have $n_2=1$.  Also since the number of vertices $m=24n+16\equiv 1\pmod{3}$, each element of order 3 must fix one vertex.   Thus $n_2=0$ and $n_3=1$.  Hence $\frac{1}{24}(16 + 6n_2' + 8)$ is an integer.  It follows that $n_2'=0$, since $n_2'\not >3$.  Let $\psi$ be an order 3 element of $\widehat{G}$.  Since $n_3=1$, $\psi$ must be a rotation about a circle $L$ containing a single vertex $v$.  Since $\widehat{G}\cong S_4$, there is an involution $\varphi\in \widehat{G}$ such that $\gen{ \psi, \varphi}\cong D_3$.  It follows that $\varphi$ inverts $L$.  However, since $v$ is the only vertex on $L$, $\varphi(v)=v$.  This is impossible since $n_2=n_2'=0$.  Thus $m\not\equiv 16\pmod {24}$.  The result follows.\end{proof}
\bigskip

\section{Embedding Lemmas}

For a given $n$, we would like to be able to construct an embedding of $K_m$ which has a particular topological symmetry group.  We do this by first embedding the vertices of $K_m$ so that they are setwise invariant under a particular group of isometries, and then we embed the edges of $K_m$ using the results below.  Note that Lemma \ref{L:edge} applies to any finite  group $G$ of diffeomorphisms of $S^3$, regardless of whether the diffeomorphisms in $G$ are orientation reversing or preserving.

\begin{lemma}\label{L:edge}
Let $G$ be a finite group of diffeomorphisms of $S^3$ and let $\gamma$ be a graph whose vertices are embedded in $S^3$ as a set $V$ such that $G$ induces a faithful action on $\gamma$. Let $Y$ denote the union of the fixed point sets of all of the non-trivial elements of $G$.  Suppose that adjacent pairs of vertices in $V$ satisfy the following hypotheses:

\begin{enumerate}

\item
 If a pair is pointwise fixed by non-trivial elements $h$, $g\in G$, then $\fix(h)=\fix(g)$.

\item
No pair is interchanged by an element of $G$.

\item Any pair that is pointwise fixed by a non-trivial $g\in G$ bounds an arc in $\fix(g)$ whose interior is disjoint from $V\cup (Y-\fix(g))$.

\item Every pair is contained in a single component of $S^3-Y$.

\end{enumerate}

Then there is an embedding of the edges of $\gamma$ such that the resulting embedding of $\gamma$ is setwise invariant under $G$.  
\end{lemma}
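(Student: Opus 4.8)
The plan is to build the edge embedding one $G$-orbit at a time, reading off the embedding of a whole edge-orbit from that of a single representative by pushing forward with the group elements. First I would choose one edge $e=\{v,w\}$ from each $G$-orbit of edges of $\gamma$ and examine its setwise stabilizer $G_e=\{\sigma\in G:\sigma\{v,w\}=\{v,w\}\}$. By hypothesis~(2) no element of $G$ interchanges $v$ and $w$, so $G_e$ fixes $v$ and $w$ individually; hence $G_e$ is nontrivial precisely when $e$ is pointwise fixed by some nontrivial element of $G$. This splits the representatives into two kinds, the ``fixed'' edges and the ``moving'' ones, which are embedded by different means.

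If the representative $e$ is pointwise fixed by a nontrivial $g$, then hypothesis~(3) supplies an arc $A\subseteq\fix(g)$ with endpoints $v,w$ and with $\INT(A)\cap(V\cup(Y-\fix(g)))=\emptyset$; by hypothesis~(1) every nontrivial element of $G_e$ has fixed-point set exactly $\fix(g)$ and therefore pointwise fixes $A$, so $A$ is $G_e$-invariant, and declaring $\sigma(e)$ to be embedded as $\sigma(A)$ is well defined (if $\sigma_1(e)=\sigma_2(e)$ then $\sigma_2^{-1}\sigma_1\in G_e$ fixes $A$ pointwise). If instead $e$ is not pointwise fixed by any nontrivial element then $G_e$ is trivial; by hypothesis~(4) there is a component $U$ of $S^3-Y$ whose closure contains $v$ and $w$. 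Since $G$ acts freely on $S^3-Y$ and permutes its components, I would realize this orbit of arcs equivariantly by passing to the orbit space: the stabilizer $G_U=\{\sigma\in G:\sigma(U)=U\}$ acts freely on the open $3$-manifold $U$, so one may embed the appropriate quotient (multi)graph in $U/G_U$ in general position, avoiding the images of $V$ and of all arcs placed at earlier stages, and lift it to a $G_U$-invariant embedded graph in $U$ whose $G$-translates occupy the other components. Handling the moving-edge orbits one at a time, the union of $V$ with all arcs placed so far remains an embedded $G$-invariant graph at every stage.

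Finally I would check that the whole family of arcs is embedded. Within a single orbit the arcs either coincide or, for a fixed edge, are distinct complementary subarcs of one fixed-point set meeting only at common vertices (their interiors miss $V$), and for a moving edge are distinct lifts of one embedded arc, hence meet only at common endpoints. Fixed arcs have interiors inside $Y$ while moving arcs have interiors inside $S^3-Y$, so the two kinds meet at most at vertices; and two fixed arcs lying in \emph{distinct} fixed-point sets meet, by~(3), only where those sets meet, a subset of $Y$ determined by the geometry of the fixed-point sets that in the situations of interest reduces to finitely many vertices. I expect the real obstacle to be the equivariant general-position step: one cannot perturb a single arc without perturbing all of its $G$-translates, so the perturbations must be organized downstairs in $U/G_U$, and additional care is needed at vertices that lie on $Y$, where $G_U$ fails to act freely on $\overline U$ and the arcs emanating from such a vertex into $U$ must be chosen compatibly with the local action of the stabilizer of that vertex.
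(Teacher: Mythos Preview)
Your approach is essentially the paper's: split the edges into those with nontrivial pointwise stabilizer and those without, embed the former along the arcs supplied by hypothesis~(3) and propagate by $G$, and handle the latter by passing to a quotient, applying general position there, and lifting.

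The one organizational difference worth noting is that the paper uses the single global quotient map $\pi:S^3\to S^3/G$, observing that $\pi|_{S^3-Y}$ is a covering onto the manifold $Q=(S^3-Y)/G$. For each moving edge it takes a path $\alpha_i$ from $x_i$ to $y_i$ with interior in $S^3-Y$ (this is what hypothesis~(4) gives), projects it, homotops rel endpoints in $Q$ to a simple path with interior disjoint from $\pi(V\cup Y)$ and from the other projected paths, and lifts. Because the endpoints $\pi(x_i),\pi(y_i)$ are allowed to lie in $\pi(Y)$ from the start, the boundary-vertex difficulty you flag at the end simply does not arise: there is no need to study the action on $\overline U$ or to choose arcs compatibly with vertex stabilizers. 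Your component quotient $U/G_U$ is canonically a piece of $Q$, so the two pictures agree; the global formulation just sidesteps the bookkeeping you were worried about. The disjointness check for the fixed arcs is also a shade cleaner than your sketch: interiors of such arcs miss $V$ and miss $Y-\fix(g)$ by hypothesis~(3), so two arcs in the same $\fix(g)$ cannot overlap nontrivially without putting a vertex in an interior, and two arcs in distinct fixed-point sets have disjoint interiors outright.
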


\begin{proof}  We partition the edges of $\g$ into sets
$F_1$ and $F_2$,
where $F_1$ consists of all edges of $\g$
both of whose embedded vertices are fixed by some non-trivial element of $G$,
and $F_2$ consists of the remaining edges of $\g$.  Thus, each $F_i$ is setwise invariant under $G$.

We first embed the edges in $F_1$ as follows.
Let $\lbrace f_{1},\dots ,f_{m}\rbrace $ be a set of edges
consisting of one representative
from the orbit of each edge in $F_1$.
Thus for each $i$, some non-trivial $g_i \in G$ fixes the embedded vertices of $f_i$.  Furthermore, by hypothesis (1), $\fix(g_i)$ is uniquely determined by $f_i$.
By hypothesis (3), the vertices of $f_i$ bound an arc $A_i\subseteq\fix(g_i)$ whose interior is disjoint from $V$ and from the fixed point set of any element of $G$ whose fixed point set is not $\fix(g_i)$.  We embed the edge $f_i$ as the arc $A_i$.  
 Now it follows from our choice of $A_i$ that the interiors of the arcs in the orbits of $A_1$, \dots, $A_m$ are pairwise disjoint.

Now let $f$ be an edge in $F_1-\lbrace f_{1},\dots ,f_{m}\rbrace$.
Then for some $g\in G$ and some edge $f_i$, we have $g(f_i)=f$.
We embed the edge $f$ as $g(A_i)$.
To see that this is well-defined,
suppose that for some $h\in G$ and some $f_j$, we also have $f=h(f_j)$.
Then $i = j$,
since we picked only one representative from each edge orbit.
Therefore $g(f_i) = h(f_i)$.
This implies $h\inv g$ fixes both vertices of $f_i$
since by hypothesis~(2)
no edge of $\g$ is inverted by $G$.
Now, by hypothesis~(1), if $h^{-1}g$ is non-trivial, then
$\fix(h\inv g) = \fix(g_{i})$.
Since $A_i \subseteq \fix(g_{i})$,
it follows that $h(A_i) = g(A_i)$, as desired.  We can thus unambiguously embed all of the edges of $F_1$.  Let $E_1$ denote this set of embedded edges.
By our construction, $E_1$ is setwise invariant under $G$.

Next we will embed the edges of $F_2$.    Let $\pi :S^{3}\rightarrow S^{3}/G$ denote the
quotient map.    Then $\pi|(S^3-Y) $ is a covering
map, and the quotient space $Q=(S^{3}-Y)/G$ is a 
$3$-manifold.  We will embed representatives of the edges of $F_2$ in the quotient space $Q$, and then lift them to get an embedding of the edges in $S^3$.

Let $\{ e_{1},\dots , e_{n}\}$ be a set of edges
consisting of one representative
from the orbit of each edge in $F_2$.
For each $i$, let $x_{i}$ and $y_{i}$ be the embedded vertices of $e_i$ in $V$.
By hypothesis (4), for each $i = 1$, \dots, $n$,
there exists a path $\alpha _{i}$ in $S^3$
from $x_{i}$ to $y_{i}$
whose interior is disjoint from $V \cup Y$.  Let
$\alpha _{i}'=\pi \circ \alpha _{i}$.
Then $\alpha _{i}'$ is a path or loop
from $\pi (x_{i})$ to $\pi (y_{i})$
whose interior is in $Q$.
Using general position in $Q$,
we can homotop each $\alpha _{i}'$, fixing its endpoints,
to a simple path or loop $\rho _{i}'$
such that the interiors of the $\rho _{i}'(I)$ are pairwise disjoint
and are each disjoint from $\pi (V \cup Y)$.
Now, for each $i$,
we lift $\rho _{i}'$ to a path $\rho_i$ beginning at $x_{i}$.  Then each $\mathrm{int}(\rho_i)$ is disjoint from $V \cup Y$.
Since $\rho _{i}'=\pi \circ \rho _{i}$
is one-to-one except possibly on the set $\lbrace 0,1\rbrace $,
$\rho _{i}$ must also be one-to-one except possibly on the set
$\lbrace 0,1\rbrace $.
Also, since $\rho _{i}'$ is homotopic fixing its endpoints to $\alpha _{i}'$,
$\rho _{i}$ is homotopic fixing its endpoints to $\alpha _{i}$.
In particular, $\rho _{i}$ is a simple path
from $x_{i}$ to $y_{i}$.
We embed the edge $e_{i}$ as $\rho _{i}(I)$.

Next, we will embed an arbitrary edge $e$ of $F_2$.
By hypothesis (2) and the definition of $F_2$,
no edge in $F_2$ is
setwise invariant under any non-trivial element of $G$.
Hence there is a unique $g \in G$ and a unique $i\leq n$
such that $e=g'(e_i)$.
It follows that $e$ determines a unique arc $g(\rho_i(I))$
between $g(x_i)$ and $g(y_i)$.
We embed $e$ as $g(\rho _{i}(I))$.  By the uniqueness of $g$ and $i$, this embedding is well-defined.
Let $E_2$ denote the set of embedded edges
of $F_2$.
Then $G$ leaves $E_2$ setwise invariant.  

Now, since each $\mathrm{int}(\rho _{i}'(I))$
is disjoint from $\pi (V)$,
the interior of each embedded edge of $E_2$
is disjoint from $V$.
Similarly, since
$\rho'_{i}(I)$ and $\rho '_{j}(I)$ have disjoint
interiors when $i\mathbin{\not =}j$, for every $g$, $h\in G$, $g(\rho
_{i}(I))$ and $h(\rho _{j}(I))$ also have disjoint interiors when $i\mathbin{\not =}j$.
And since $\rho_i'$ is a simple path or loop
whose interior is disjoint from $\pi(Y)$,
if $g\mathbin{\not =}h$,
then $g(\rho _{i}(I))$ and $h(\rho _{i}(I))$
have disjoint interiors.  Thus the embedded edges of $E_2$ have pairwise disjoint interiors.

Let $\Gamma $ consist of the set of embedded vertices $V$
together with the set of embedded edges $E_1 \cup E_2$.
Then $\Gamma $ is setwise invariant under $G$.
Also, every edge in $E_1$ is an arc of $Y$, whose interior is disjoint from $V$,
and the interior of every edge in $E_2$ is a subset of $S^3 - (Y\cup V)$.  Therefore the interiors of the edges in $E_1$ and $E_2$ are disjoint.  Hence $\Gamma$ is an embedded graph with
underlying abstract graph $\gamma $, and
$\Gamma$ is setwise invariant under $G$. 
\end{proof}

\medskip

 We use Lemma 7 to prove the following result.  Note that $\Diff(S^3)$ denotes the group of orientation preserving diffeomorphisms of $S^3$.  Thus by contrast with Lemma \ref{L:edge}, the Edge Embedding Lemma only applies to finite groups of orientation preserving diffeomorphisms of $S^3$.
\medskip

\begin{Graph}  Let $G$ be a finite subgroup of $\Diff(S^3)$ and let $\gamma$ be a graph whose vertices are embedded in $S^3$ as a set $V$ such that $G$ induces a faithful action on $\gamma$. Suppose that adjacent pairs of vertices in $V$ satisfy the following hypotheses:
\begin{enumerate}

\item If a pair is pointwise fixed by non-trivial elements $h$, $g\in G$, then $\fix(h)=\fix(g)$.

\item For each pair $\{v, w\}$ in the fixed point set $C$ of some non-trivial element of $G$, there is an arc $A_{vw}\subseteq C$ bounded by $\{v, w\}$ whose interior is disjoint from $V$ and from any other such arc $A_{v'w'}$.

\item If a point in the interior of some $A_{vw}$ or a pair $\{v,w\}$ bounding some $A_{vw}$ is setwise invariant under an $f\in G$, then $f(A_{vw})=A_{vw}$.

\item  If a pair is interchanged by some $g\in G$, then the subgraph of $\gamma$ whose vertices are pointwise fixed by $g$ can be embedded in a proper subset of a circle.

\item  If a pair is interchanged by some $g\in G$, then $\fix(g)$ is non-empty, and for any $h\not =g$, then $\fix(h)\not=\fix(g)$.

\end{enumerate}

Then there is an embedding of the edges of $\gamma$ in $S^3$ such that the resulting embedding of $\gamma$ is setwise invariant under $G$.  
\end{Graph}

\begin{proof} Let $\gamma'$ denote $\gamma$ together with a valence 2 vertex added to the interior of every edge whose vertices are interchanged by some element of $G$.  Then $G$ induces a faithful action on $\gamma'$, since $G$ induces a faithful action on $\gamma$.  For each $g\in G$ we will let $g' $ denote the automorphism of $\gamma'$ induced by $g$, and let $G'$ denote the group of automorphisms of $\gamma'$ induced by $G$.  No element of $G'$ interchanges a pair of adjacent vertices of $\gamma'$.  Since $G$ induces a faithful action on $\gamma'$, each $g'\in G$ is induced by a unique $g\in G$.

Let $M$ denote the set of vertices of $\gamma'$ which are not in $\gamma$.  Each vertex $m\in M$ is fixed by an element $f'\in G'$ which interchanges the pair of vertices adjacent to $m$.  We partition $M$ into sets $M_1$ and $M_2$, where $M_1$ contains those vertices of $M$ whose adjacent vertices are both fixed by a non-trivial automorphism in $G'$ and $M_2$ contains those vertices of $M$ whose adjacent vertices are not both fixed by a non-trivial automorphism in $G'$.

We first embed the vertices of $M_1$.  Let $\lbrace m_{1},\dots ,m_{r}\rbrace $ be a set consisting of one representative
from the orbit of each vertex in $M_1$, and for each $m_i$, let $v_i$ and $w_i$ denote the vertices which are adjacent to the vertex $m_i$ in $\gamma'$.  Thus $v_i$ and $w_i$ are adjacent vertices of $\gamma$.  By definition of $M_1$, each $m_i$ is fixed both by some automorphism $f_i'\in G'$ which interchanges $v_i$ and $w_i$ and by some element $h_i'\in G'$ which fixes both $v_i$ and $w_i$.  Let $f_i$ and $h_i$ be the elements of $G$ which induce $f_i'$ and $h_i'$ respectively.  Let $A_{v_iw_i}$ denote the arc in $\fix(h_i)$ given by hypothesis (2).  Since $f_i$ interchanges $v_i$ and $w_i$, it follows from hypothesis (3) that $f_i(A_{v_iw_i})=A_{v_iw_i}$.  Also since $f_i$ has finite order, there is a unique point $x_i$ in the interior of $A_{v_iw_i}$ which is fixed by $f_i$.  We embed $m_i$ as the point $x_i$.  By hypothesis (2), if $i\not =j$, then $A_{v_iw_i}$ and $A_{v_jw_j}$ have disjoint interiors, and hence $x_i\not =x_j$.

We see as follows that the choice of $x_i$ does not depend on the choice of either $h_i$ or $f_i$.  Suppose that $m_i$ is fixed by some $f'\in G'$ which interchanges $v_i$ and $w_i$ and some $h'\in G'$ which fixes both $v_i$ and $w_i$.  Let $f$ and $h$ be the elements of $G$ which induce $f'$ and $h'$ respectively. Since both $f$ and $h$ leave the pair $\{v_i, w_i\}$ setwise invariant, by hypothesis (3) both $f$ and $h$ leave the arc $A_{v_iw_i}$ setwise invariant.   Since $h$ has finite order and fixes both $v_i$ and $w_i$, $h$ pointwise fixes the arc $A_{v_iw_i}$, and hence $\fix(h)=\fix(h_i)$.  Thus the choice of the arc $A_{v_iw_i}$ does not depend on $h$.  Also since $G$ has finite order, and $f$ and $f_i$ both interchange $v_i$ and $w_i$ leaving $A_{v_iw_i}$ setwise invariant, $f^{-1}f_i$ pointwise fixes $A_{v_iw_i}$. Hence $f|A_{v_iw_i}=f_i|A_{v_iw_i}$, and thus the choice of $x_i$ is indeed independent of $f_i$ and $h_i$.  In fact, by the same argument we see that $x_i$ is the unique point in the interior of $A_{v_iw_i}$ that is fixed by an element of $G$ which setwise but not pointwise fixes $A_{v_iw_i}$ (we will repeatedly use this fact below). 

Now let $m$ denote an arbitrary point in $M_1$.  Then for some $i$ and some automorphism $g'\in G'$, $m=g'(m_i)$.  Let $v$ and $w$ be the vertices that are adjacent to the vertex $m$ in $\gamma'$.   Then $\{v,w\}=g'(\{v_i,w_i\})$.  Let $g$ be the element of $G$ which induces $g'$.  We embed $m$ as the point $g(x_i)$.  To see that this embedding is unambiguous, suppose that for some other automorphism $\varphi'\in G'$, we also have $m=\varphi'(m_j)$.  Then $j=i$, since the orbits of $m_{1}$,\dots, $m_{r}$ are disjoint.  Let $\varphi$ denote the element of $G$ which induces $\varphi'$.  Then $g^{-1}\varphi(m_i)=m_i$, and hence $g^{-1}\varphi(\{v_i,w_i\})=\{v_i,w_i\}$.  It follows from hypothesis (3) that $g^{-1}\varphi(A_{v_iw_i})=A_{v_iw_i}$.  Now $x_i$ is the unique point in the interior of $A_{v_iw_i}$ that is fixed by an element of $G$ which setwise but not pointwise fixes $A_{v_iw_i}$.  It follows that $\varphi( x_i)=g(x_i)$. Thus our embedding is well defined for all of the points of $M_1$.  Furthermore, since $m_1$, \dots, $m_r$ have distinct orbits under $G'$, the pairs $\{v_1,w_1\}$, \dots, $\{v_r,w_r\}$ have distinct orbits under $G$.  Hence the arcs $A_{v_1w_1}$,\dots, $A_{v_rw_r}$ are not only disjoint, they also have distinct orbits under $G$.  Thus the points of $M_1$ are embedded as distinct points of $S^3$.

Next we embed the vertices of $M_2$.  Let $\{g_1',\dots,g_q'\}$ consist of one representative from each conjugacy class of automorphisms in $G'$ which fix a point in $M_2$, and let each $g_i'$ be induced by $g_i\in G$.  For each $g_i'$, from the set of vertices of $M_2$ that are fixed by that $g_i'$, choose a subset $\{p_{i1},\dots, p_{in_i}\}$  consisting of one representative from each of their orbits under $G$. 

 Let $F_i=\fix(g_i)$.  By hypothesis (5) and Smith Theory \cite{Sm}, $F_i\cong S^1$ and $F_i$ is not the fixed point set of any element of $G$ other than $g_i$.   Thus each arc $A_{vw}$ that is a subset of $F_i$ corresponds to some edge of $\gamma$ whose vertices are fixed by $g_i$.  By hypothesis (4) the subgraph of $\gamma$ whose vertices are fixed by $g_i$ is homeomorphic to a proper subset of a circle.  Furthermore, since $G\leq \Diff(S^3)$ is finite, the fixed point set of any non-trivial element of $G$ other than $g_i$ meets $F_i$ in either 0 or 2 points.  Thus we can choose an arc $A_{i}\subseteq F_i$ which does not intersect any $A_{vw}$, is disjoint from the fixed point set of any other non-trivial element of $G$, and is disjoint from its own image under any other non-trivial element of $G$.  Now we can choose a  set $\{y_{i1},\dots, y_{in_i}\}$ of distinct points in the arc $A_i$, and embed the set of vertices $\{p_{i1},\dots, p_{in_i}\}$ as the set $\{y_{i1},\dots, y_{in_i}\}$.   Observe that if some $p_{ij}$ were also fixed by a non-trivial automorphism $g'\in G'$ such that $g'\not =g_i'$, then either $g'$ or $g'g_i'$ would fix both vertices adjacent to $p_{ij}$, which is contrary to the definition of $M_2$.  Hence $g_i'$ is the unique non-trivial automorphism in $G'$ fixing $p_{ij}$.  Thus our embedding of $p_{ij}$ is well defined.

We embed an arbitrary point $p$ of $M_2$ as follows.  Choose $i$, $j$, and $g'\in G'$ such that $p=g'(p_{ij})$, and $g'$ is induced by a unique element $g\in G$.  Since $p_{ij}$ is embedded as a point $y_{ij}\in A_i\subseteq \fix(g_i)$, we embed $p$ as $g(y_{ij})$.  To see that this is well defined, suppose that for some automorphism $\varphi'\in G'$ we also have $p=\varphi'(p_{lk})$, and $\varphi'$ is induced by $\varphi\in G$.  Then $p_{ij}=p_{lk}$, since their orbits share the point $p$, and hence are equal.   Now $(g')^{-1}\varphi'(p_{ij})=p_{ij}$.  But since $p_{ij}\in M_2$, no non-trivial element of $G'$ other than $h_i'$ fixes $p_{ij}$.  Thus either $(g')^{-1}\varphi'=g_i'$ or $g'=\varphi'$.   In the former case, the diffeomorphism $g^{-1}\varphi=g_i$ fixes the point $y_{ij}$, since $y_{ij}\in \fix(g_i)$.  Hence in either case $g(y_{ij})=\varphi(y_{ij})$.  Thus our embedding is well defined for all points of $M_2$.  


Recall that, if $i\not= j$, then $g_i$ and $g_j$ are in distinct conjugacy classes of $G$.  Also, by hypothesis (5), $F_i$ is not fixed by any non-trivial element of $G$ other than $g_i$.   Now it follows that $F_i$ is not in the orbit of $F_j$, and hence the points of $M_2$ are embedded as distinct points of $S^3$.  Finally, since the points of $M_1$ are each embedded in an arc $A_{vw}$ and the points of $M_2$ are each embedded in an arc which is disjoint from any $A_{vw}$, the sets of vertices in $M_1$ and $M_2$ have disjoint embeddings.

Let $V'$ denote $V$ together with the embeddings of the points of $M$.  Thus we have embedded the vertices of $\gamma'$ in $S^3$. We check as follows that the hypotheses of Lemma 7 are satisfied for $V'$.  When we refer to a hypothesis of Lemma 7 we shall put an * after the number of the hypothesis to distinguish it from a hypothesis of the lemma we are proving.  We have defined $V'$ so that it is setwise invariant under $G$ and $G$ induces a faithful action on $\gamma'$.  Also, by the definition of $\gamma'$, hypothesis (2*)  is satisfied.  Since $G$ is a finite subgroup of $\Diff(S^3)$, the union $Y$ of all of the fixed point sets of the non-trivial elements of $G$ is a graph in $S^3$.  Thus $S^3-Y$ is connected, and hence hypothesis (4*) is satisfied.

To see that hypothesis (1*) is satisfied for $V'$, suppose a pair $\{x,y\}$ of adjacent vertices of $\gamma'$ are both fixed by non-trivial elements $h$, $g\in G$.  If the pair is in $V$, then they are adjacent in $\gamma$, and hence by hypothesis (1) we know that $\fix(h)=\fix(g)$.  Thus suppose that $x\in M$.  Then $x\in M_1$, since the vertices in $M_2$ are fixed by at most one non-trivial automorphism in $G'$.  Now without loss of generality, we can assume that $x$ is one of the $m_i\in M_1$ and $y$ is an adjacent vertex $v_i$.  Thus $x$ is embedded as $x_i\in \INT(A_{v_iw_i})$.  Since $h$ and $g$ both fix $x_i$, by hypothesis (3), both $h(A_{v_iw_i})=A_{v_iw_i}$ and $g(A_{v_iw_i})=A_{v_iw_i}$.  It follows that $h$ and $g$ both fix $w_i$, since we know they fix $v_i$.  Now $\{v_i,w_i\}$ are an adjacent pair in $\gamma$.  Hence again by hypothesis (1), $\fix(h)=\fix(g)$.  It follows that hypothesis (1*) is satisfied for $V'$.  

It remains to check that hypothesis (3*) is satisfied for $V'$.  Let $s$ and $t$ be adjacent vertices of $\gamma'$ which are fixed by some non-trivial $g\in G$. We will show that $s$ and $t$ bound an arc in $\fix(g)$ whose interior is disjoint from $V'$, and if any $f\in G$ fixes a point in the interior of this arc then $\fix(f)=\fix(g)$.  First suppose that $s$ and $t$ are both in $V$.  Then no element of $G$ interchanges $s$ and $t$.  Now, by hypothesis (2), $s$ and $t$ bound an arc $A_{st}\subseteq \fix(g)$ whose interior is disjoint from $V$.  Furthermore, by (2), $\INT(A_{st})$ is disjoint from any other $A_{vw}$, if $\{v,w\}\not =\{s,t\}$.  Thus $\INT(A_{st})$ is disjoint from the embedded points of $M_1$.  Also the embedded points of $M_2$ are disjoint from any $A_{vw}$.  Thus $\INT(A_{st})$ is disjoint from $V'$.  Suppose some $f\in G$ fixes a point in $\INT(A_{st})$.    Then by hypothesis (3), $f(A_{st})=A_{st}$.  So $f$ either fixes or interchanges $s$ and $t$.  In the latter case, $s$ and $t$ would not be adjacent in $\gamma'$.   Thus $f$ fixes both $s$ and $t$.  Now by hypothesis (1), we must have $\fix(f)=\fix(g)$.  So the pair of vertices $s$ and $t$ satisfy hypothesis (3*).  

Next suppose that $s\in V'- V$.  Since $s$ and $t$ are adjacent in $\gamma'$, we must have $t\in V$.  Now $s$ is the embedding of some $m\in M$, and $m$ is adjacent to vertices $v$, $t\in V$ which are both fixed by $g$.  Thus $m\in M_1$.  By our embedding of $M_1$, for some $h\in G$ and some $i$, we have $s=h(x_i)$ where $x_i$ is adjacent to vertices $v_i$ and $w_i$ in $\gamma'$.  It follows that $\{v,t\}=h(\{v_i,w_i\})$. Thus $h^{-1}gh$ fixes $v_i$ and $w_i$.  Let $h_i$ and $A_{v_iw_i}\subseteq \fix(h_i)$ be as in the description of our embedding of the points in $M_1$.  Thus $v_i$ and $w_i$ are adjacent vertices of $\gamma$ which are fixed by both $h_i$ and $h^{-1}gh$.  By hypothesis (1), $\fix(h_i)=\fix(h^{-1}gh)$.  Thus $A_{v_iw_i}\subseteq \fix(h^{-1}gh)$.  Let $A=h(A_{v_iw_i})$.  Then $A$ is an arc bounded by $v$ and $t$ which is contained in $\fix(g)$.  Furthermore, the interior of $A_{v_iw_i}$ is disjoint from  $V$ and $x_i$ is the unique point in the interior of $A_{v_iw_i}$ that is fixed by an element of $G$ which setwise but not pointwise fixes $A_{v_iw_i}$.  Thus the interior of $A$ is disjoint from $V$ and $s=h(x_i)$ is the unique point in the interior of $A$ that is fixed by an element of $G$ which setwise but not pointwise fixes $A$.  Let $A_{st}$ denote the subarc of $A$ with endpoints $s$ and $t$.  Then $\INT(A_{st})$ is disjoint from $V'$, and $A_{st}$ satisfies hypothesis (3*).

Thus we can apply Lemma 7 to the embedded vertices of $\gamma'$ to get an embedding of the edges of $\gamma'$ such that the resulting embedding of $\gamma'$ is setwise invariant under $G$.  Now by omitting the vertices of $\gamma'-\gamma$ we obtain the required embedding of $\gamma$.  \end{proof}

\bigskip


\section{Embeddings with $\TSG(\Gamma)\cong S_4$}

 Recall from Theorem \ref{S4necessary} that if $K_m$ has an embedding $\Gamma$ with $\TSG(\Gamma)\cong S_4$, then $m\equiv 0$, $4$, $8$, $12$, $20\pmod{24}$.  For each of these values of $m$, we will use the Edge Embedding Lemma to construct an embedding of $K_m$ whose topological symmetry group is isomorphic to $S_4$.

\bigskip

\begin{prop}\label{T:K12} Let $m\equiv 0$, $4$, $8$, $12$, $20\pmod{24}$. Then there is an embedding $\Gamma$ of $K_m$ in $S^3$ such that $\TSG(\Gamma)\cong S_4$.  \end{prop}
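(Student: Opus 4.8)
The plan is to combine the Edge Embedding Lemma with the Complete Graph Theorem. Observe first that it suffices to produce, for each admissible $m$, an embedding $\Gamma$ of $K_m$ that is setwise invariant under a finite group $\widehat G\leq\Diff(S^3)$ inducing a faithful action isomorphic to $S_4$: then $S_4\leq\TSG(\Gamma)$, so by the Complete Graph Theorem $\TSG(\Gamma)$ is cyclic, dihedral, a subgroup of some $D_k\times D_k$ with $k$ odd, $A_4$, $S_4$, or $A_5$; but no cyclic or dihedral group has a subgroup isomorphic to $S_4$, a subgroup of $D_k\times D_k$ with $k$ odd has no element of order $4$, $|A_4|<|S_4|$, and $|S_4|\nmid|A_5|$, so in fact $\TSG(\Gamma)\cong S_4$.

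I would fix $\widehat G$ to be the image in $\mathrm{SO}(4)$ of the faithful homomorphism $g\mapsto(\rho(g),\det\rho(g))$, where $\rho\colon S_4\to\mathrm O(3)$ realizes $S_4$ as the symmetry group of a regular tetrahedron, acting on $S^3\subseteq\R^3\times\R$. In this model each $4$-cycle is a fixed-point-free glide rotation — as it must be, by Theorem \ref{S4necessary} — while each transposition, double transposition, and $3$-cycle is a rotation with geodesic-circle fixed point set; two such circles coincide only for a $3$-cycle and its inverse, the fixed circles of the even elements all contain the two poles $\pm e_4=(0,0,0,\pm1)$, and those of the transpositions lie in the equatorial $S^2$ and miss $\pm e_4$. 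Next I would place the $m$ vertices as a disjoint union of $\widehat G$-orbits on which $\widehat G$ acts faithfully, chosen so that every $4$-cycle acts on each orbit by disjoint $4$-cycles — equivalently, no $4$-cycle fixes a vertex or interchanges a pair of vertices — which rules out orbits of sizes $1,2,3,6$ and one of the two kinds of orbit of size $12$. Concretely: for $m\equiv 0$ take $m/24$ free orbits; for $m\equiv 4$, one orbit of size $4$ (the tetrahedron vertices) plus free orbits; for $m\equiv 8$, one orbit of size $8$ whose stabilizers are the Sylow $3$-subgroups plus free orbits; for $m\equiv 12$, one orbit of size $12$ whose stabilizers are generated by transpositions plus free orbits; and for $m\equiv 20$, both the size-$8$ and size-$12$ orbits above plus free orbits. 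In each case a direct count from the orbit structure gives $(n_2,n_2',n_3)=(0,0,0)$, $(0,2,1)$, $(0,0,2)$, $(0,2,0)$, $(0,2,2)$ respectively, which satisfy $n_2\leq1$, $n_2'\leq2$, $n_3\leq3$ and make $m+3n_2+6n_2'+8n_3$ divisible by $24$, as they must by the analysis in Section~\ref{S:nonexistence}; moreover the only adjacent pairs of vertices lying on a common fixed circle are the pairs fixed by a transposition (present only when the size-$4$ or size-$12$ orbit is used) and the pairs fixed by a $3$-cycle (present only when the size-$8$ orbit is used).

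Finally I would verify hypotheses (1)--(5) of the Edge Embedding Lemma for this configuration. Hypotheses (4) and (5) hold because the only elements interchanging a pair of adjacent vertices are involutions, each of which has a nonempty fixed circle shared by no other element, and the subgraph induced on the at most two vertices fixed by an involution embeds in a proper subarc of a circle. Hypothesis (1) holds because the pointwise stabilizer of any pair of vertices is cyclic — a short case check on the possible stabilizers $S_3$, $\Z_3$, $\Z_2$ — so each forced edge has a well-defined fixed circle to lie on. The crux is hypotheses (2) and (3): the forced edges must be drawn as arcs on these fixed circles with pairwise disjoint interiors, compatibly with the group action. When the size-$4$ orbit is used I would take the forced edges to be the six geodesic edges of the tetrahedron, whose interiors are pairwise disjoint. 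When the size-$8$ or size-$12$ orbit is used, the two vertices fixed by a given $3$-cycle or transposition can be shown to lie on a common subarc of their fixed circle that misses $\pm e_4$ and the tetrahedral axis points: the normalizer of the stabilizer acts on that circle as a reflection fixing those axis points and interchanging $\pm e_4$, so the two orbit points are symmetric across an axis point and an arc joining them avoids the poles; since distinct forced circles of this type meet only at $\pm e_4$, routing each forced edge along such a short arc keeps the forced edges disjoint, and any $f\in\widehat G$ that preserves one of these arcs (or its endpoints) preserves its fixed circle, which gives hypothesis (3). The Edge Embedding Lemma then produces the desired $\widehat G$-invariant embedding of $K_m$, and the opening paragraph completes the proof. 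I expect the main difficulty to lie precisely in this last step — choosing the positions of the orbit points within their orbits so that all the forced fixed circles, and the arcs drawn on them, are in sufficiently general position to meet hypotheses (2) and (3).
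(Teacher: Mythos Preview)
Your approach is essentially identical to the paper's: the same subgroup $\widehat{G}\leq\mathrm{SO}(4)$ (the paper describes it as the orientation preserving isometries of $S^3$ preserving a tetrahedral $1$-skeleton, which is exactly your $(\rho,\det\rho)$ model), the same orbit types for each residue class mod $24$, and the same appeal to the Edge Embedding Lemma followed by the Complete Graph Theorem.

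There is, however, one concrete slip in your verification of hypothesis~(2). You assert that ``distinct forced circles of this type meet only at $\pm e_4$,'' and while this is correct for the fixed circles of $3$-cycles, it is false for the fixed circles of transpositions: those lie entirely in the equatorial $S^2$ and so miss $\pm e_4$ altogether, yet any two of them intersect in two antipodal points of $S^2$. Explicitly, $\fix((ij))\cap\fix((kl))$ consists of the normalized edge midpoints $\hat m_{ij},\hat m_{kl}$ when $\{i,j\}$ and $\{k,l\}$ are disjoint, and of the tetrahedron vertex not in $\{i,j,k,l\}$ together with its antipode otherwise. So your disjointness argument for the size-$12$ arcs does not go through as stated, and indeed the arc you describe on $\fix((12))$ necessarily passes through the axis point $\hat m_{34}$, which lies on $\fix((34))$. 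The repair is what the paper does pictorially: place the two orbit points on $\fix((12))$ close to $\hat m_{34}$ and take the short arc through $\hat m_{34}$; then the only other transposition circle through that point is $\fix((34))$, whose own arc sits near $\hat m_{12}$ and hence avoids $\hat m_{34}$, and the arc is short enough to miss the remaining intersection points $3,4,-3,-4,\hat m_{12}$. With this correction your argument is complete and coincides with the paper's.
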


\begin{proof}  Let $G\cong S_4$ be the finite group of orientation preserving isometries of $S^3$ which leaves the 1-skeleton $\tau$ of a tetrahedron setwise invariant.    Observe that every non-trivial element of $G$ with non-empty fixed point set is conjugate to one of the rotations $f$, $h$, $g$, or $g^2$ illustrated in Figure \ref{K12}.   Furthermore, an even order element of $G$ has non-empty fixed point set if and only if it is an involution.  Also, for every $g \in G$ of order 4, every point in $S^3$ has an orbit of size 4 under $g$, so $g$ does not interchange any pair of vertices.Thus regardless of how we embed our vertices, hypothesis (5) of the Edge Embedding Lemma will be satisfied.

\begin{figure} [h]
\includegraphics{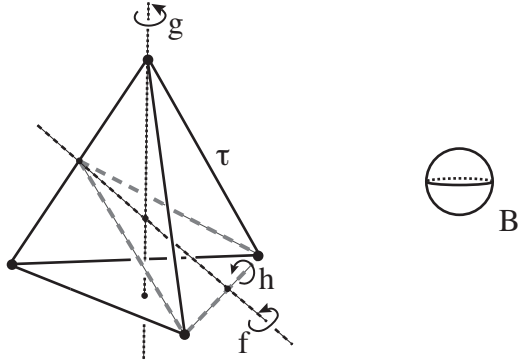}
\caption{$G$ leaves the 1-skeleton $\tau$ of a tetrahedron setwise invariant.  The ball $B$ is disjoint from the fixed point set of any non-trivial element of $G$ and from its image under every non-trivial element of $G$.}
\label{K12}
\end{figure}

Let $n\geq 0$.  We begin by defining an embedding of $K_{24n}$.   Let $B$ denote a ball which is disjoint from the fixed point set of any non-trivial element of $G$, and which is disjoint from its image under every non-trivial element of $G$.  Choose $n$ points in $B$, and let $V_0$ denote the orbit of these points under $G$.  Since $|S_4|=24$, the set $V_0$ contains $24n$ points.  These points will be the embedded vertices of $K_{24n}$.  Since none of the points in $V_0$ is fixed by any non-trivial element of $G$, it is easy to check that hypotheses (1) - (4) of the Edge Embedding Lemma are satisfied for the set $V_0$.  Thus the Edge Embedding Lemma gives us an embedding $\Gamma_0$ of $K_{24n}$ which is setwise invariant under $G$.  It follows that $\TSG(\Gamma_0)$ contains a subgroup isomorphic to $S_4$.  However, we know by the Complete Graph Theorem that $S_4$ cannot be isomorphic to a proper subgroup of $\TSG(\Gamma_0)$.  Thus $S_4\cong \TSG(\Gamma_0)$.

Next we will embed $K_{24n+4}$.  Let $V_4$ denote the four corners of the tetrahedron $\tau$ (illustrated in Figure \ref{K12}).  We embed the vertices of $K_{24n+4}$ as the points in $V_4\cup V_0$.  Now the edges of $\tau$ are the arcs required by hypothesis (2) of the Edge Embedding Lemma.  Thus it is not hard to check that the set $V_4\cup V_0$ satisfies the hypotheses of the Edge Embedding Lemma.  By applying the Edge Embedding Lemma and the Complete Graph Theorem as above we obtain an embedding $\Gamma_4$ of $K_{24n+4}$ such that $S_4\cong \TSG(\Gamma_4)$.

\begin{figure} [h]
\includegraphics{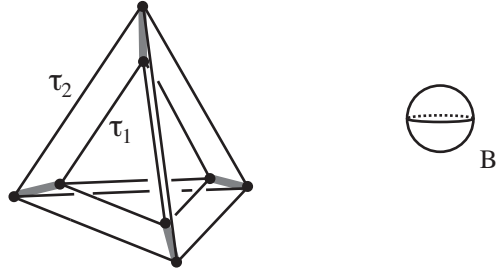}
\caption{The points of $V_8$ are the vertices of $\tau_1\cup\tau_2$.  The arcs required by hypothesis (2) are the gray arcs between corresponding vertices.  }
\label{K8S4}
\end{figure}

Next we will embed $K_{24n+8}$.  Let $T$ denote a regular solid tetrahedron with 1-skeleton $\tau$.   Let $\tau_1$ denote the 1-skeleton of a tetrahedron contained in $T$ and let $\tau_2$ denote the 1-skeleton of a tetrahedron in $S^3-T$ such that $\tau_1\cup \tau_2$ is setwise invariant under $G$.  Observe that $\tau_1$ and $\tau_2$ are interchanged by all elements conjugate to $h$ in Figure 1, and each $\tau_i$ is setwise fixed by all the other elements of $G$.  We obtain the graph illustrated in Figure \ref{K8S4} by connecting $\tau_1$ and $\tau_2$ with arcs contained in the fixed point sets of the elements of $G$ of order 3.  Now let $V_8$ denote the vertices of $\tau_1\cup \tau_2$.  Then $V_8$ is setwise invariant under $G$.  We embed the vertices of $K_{24n+8}$ as the points of $V_8\cup V_0$.  It is easy to check that hypothesis (1) of the Edge Embedding Lemma is satisfied.  To check hypothesis (2), first observe that the only pairs of vertices that are fixed by a non-trivial element of $G$ are the pairs of endpoints of the arcs joining $\tau_1$ and $\tau_2$ (illustrated as gray arcs in Figure \ref{K8S4}).  These arcs are precisely those required by hypothesis (2).  Now hypotheses (3) and (4) follow easily.  Thus again by applying the Edge Embedding Lemma and the Complete Graph Theorem we obtain an embedding $\Gamma_8$ of $K_{24n+8}$ such that $S_4\cong \TSG(\Gamma_8)$.

\begin{figure} [h]
\includegraphics{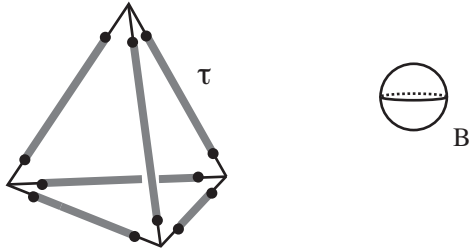}
\caption{The points of $V_{12}$ are symmetrically placed on the edges of $\tau$.  The arcs required by hypothesis (2) are the gray arcs between vertices on the same edge of $\tau$. }
\label{K12S4}
\end{figure}

Next we will embed $K_{24n+12}$.  Let $V_{12}$ be a set of 12 vertices which are symmetrically placed on the edges of the tetrahedron $\tau$ so that $V_{12}$ is setwise invariant under $G$ (see Figure \ref{K12S4}). We embed the vertices of $K_{24n+12}$ as $V_{12}\cup V_0$.  It is again easy to check that hypothesis (1) of the Edge Embedding Lemma is satisfied by $V_{12}\cup V_0$.  To check hypothesis (2) observe that the only pairs of vertices that are fixed by a non-trivial element of $G$ are pairs on the same edge of $\tau$.  The arcs required by hypothesis (2) in Figure \ref{K12S4} are illustrated as gray arcs.  Hypotheses (3) and (4) now follow easily.  Thus again by applying the Edge Embedding Lemma and the Complete Graph Theorem we obtain an embedding $\Gamma_{12}$ of $K_{24n+12}$ such that $S_4\cong \TSG(\Gamma_{12})$.

 \begin{figure} [h]
\includegraphics{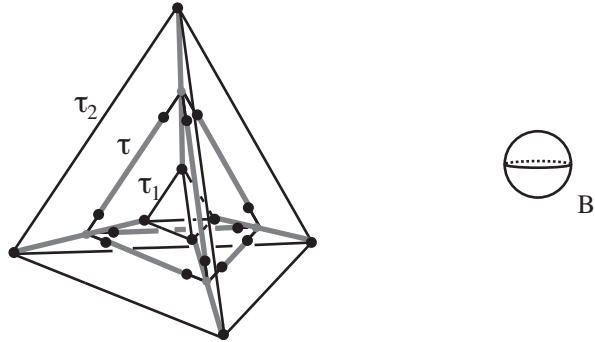}
\caption{The points of $V_8\cup V_{12}$ are the vertices of $\tau\cup\tau_1\cup\tau_2$.  The gray arcs required by hypothesis (2) are the union of those in Figures 2 and 3. }
\label{K20S4}
\end{figure}

 Finally, in order to embed $K_{24n+20}$ we first embed the vertices as $V_0\cup V_8\cup V_{12}$ from Figures \ref{K8S4} and \ref{K12S4}.  In Figure \ref{K20S4}, the 20 vertices of $V_8\cup V_{12}$ are indicated by black dots and the arcs required by hypothesis (2) are highlighted in gray.  These vertices and arcs are the union of those illustrated in Figures \ref{K8S4} and \ref{K12S4}. Now again by applying the Edge Embedding Lemma and the Complete Graph Theorem we obtain an embedding $\Gamma_{12}$ of $K_{24n+8}$ such that $S_4\cong \TSG(\Gamma_{12})$.  \end{proof}

  \bigskip

  The following theorem summarizes our results on when a complete graph can have an embedding whose topological symmetry group is isomorphic to $S_4$.

\begin{S4}
A complete graph $K_m$ with $m\geq 4$ has an embedding $\Gamma$ in $S^3$ such that  $\TSG(\Gamma) \cong  S_4$ if and only if $m \equiv 0$, $4$, $8$, $12$, $20 \pmod {24}$.
\end{S4}
\bigskip

\section{Embeddings $\Gamma$ with $\TSG(\Gamma)\cong A_5$} \label{S:A5}

Recall from Theorem \ref{A5necessary} that if $K_m$ has an embedding $\Gamma$ in $S^3$ such that $G=\TSG(\Gamma)\cong A_5$, then $m\equiv 0$, 1,  5, or  20 $\pmod {60}$.  In this section we show that for all of these values of $m$ there is an embedding of $K_m$ whose topological symmetry group is isomorphic to  $A_5$.

\begin{prop}\label{P:A5}
Let $m\equiv 0$, $1$, $5$, $20 \pmod{60}$.  Then there exists an embedding $\Gamma$ of $K_m$ in $S^3$ such that $\TSG(\Gamma)\cong A_5$.
\end{prop}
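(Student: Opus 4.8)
The plan is to mimic the strategy used for $S_4$ in Proposition~\ref{T:K12}: realize $A_5$ as the group $G$ of orientation preserving isometries of $S^3$ preserving the $1$-skeleton of a regular solid dodecahedron (equivalently, leaving invariant an appropriate symmetric graph), and then for each residue class of $m$ modulo $60$ place the vertices of $K_m$ in a $G$-invariant configuration and appeal to the Edge Embedding Lemma together with the Complete Graph Theorem. Since $|A_5| = 60$, a free orbit has $60$ points; so as in the $S_4$ case I would fix a ball $B$ disjoint from the fixed point set of every non-trivial element of $G$ and from all its non-trivial images, choose $n$ points in $B$, and let $V_0$ be their orbit, giving $60n$ freely-permuted vertices. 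This handles the ``$+0$'' part of every case; hypotheses (1)--(5) of the Edge Embedding Lemma are immediate for $V_0$ since no non-trivial element of $G$ fixes any of its points, and no order-$3$ or order-$5$ element interchanges a pair of points (all its orbits have size $3$ or $5$), while the involutions must also be checked not to interchange an adjacent pair — but since $V_0$ is free this is automatic.

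Next I would handle the three remaining residues $1$, $5$, $20$ by choosing small $G$-invariant vertex sets $V_1$, $V_5$, $V_{20}$ of sizes $1$, $5$, $20$ and embedding $K_m$ on $V_j \cup V_0$. For $V_1$: take the center point of the dodecahedron (a global fixed point of $G$), which is fixed by everything; since $K_m$ has $m \equiv 1$ one isolated fixed vertex is consistent, and the only adjacent pairs fixed by non-trivial elements involve this vertex and... nothing, so hypothesis (2) is vacuous and the rest follow. For $V_5$: take the orbit of size $5$ — e.g. the five inscribed tetrahedra give a natural $A_5$-action on a $5$-element set, realized geometrically as five points permuted by $A_5$ (this is the standard action of $A_5$ on $5$ objects); I need to verify no pair of these five points lies on a common fixed-point circle (so hypothesis (2) is again vacuous) and that no involution interchanges two of them in a way violating hypothesis (5) — here I would use that the point stabilizer is $A_4$, and check the geometry. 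For $V_{20}$: take the $20$ vertices of the dodecahedron itself; the arcs required by hypothesis (2) are the edges of the dodecahedron, which lie on the fixed-point circles of the order-$3$ rotations, exactly as the edges of $\tau$ did in the $S_4$ argument; hypotheses (1),(3),(4) then follow as before, and (5) holds because the order-$5$ rotations are rotations (their fixed sets are circles through opposite faces) and I can check no two adjacent dodecahedron-vertices are interchanged by an element with inappropriate fixed set.

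The main obstacle I expect is verifying hypothesis (2) (and the consistency of fixed-vertex counts with $n_2,n_3,n_5$) for $V_{20}$, i.e. producing the requisite system of pairwise-disjoint, $G$-equivariantly chosen arcs on the dodecahedron's edges whose interiors avoid all other fixed-point sets. Concretely, each edge of the dodecahedron must bound an arc inside the fixed circle of the relevant order-$3$ rotation whose interior misses the other two vertices on that circle and misses the fixed circles of the order-$2$ and order-$5$ elements; this is the analogue of the ``gray arcs'' pictures in the $S_4$ section and requires a careful but routine check of the incidence geometry of the dodecahedron's symmetry axes. A secondary subtlety is confirming in each case that the realized group is exactly $A_5$ and not larger: here the Complete Graph Theorem does the work, since $A_5$ is maximal among the groups on its list that contain $A_5$, so any embedding invariant under $G \cong A_5$ has $\TSG(\Gamma) \cong A_5$. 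Finally I would remark, as the paper does after the $A_4$ and $S_4$ statements, that combining Proposition~\ref{P:A5} with Theorem~\ref{A5necessary} proves the $A_5$ Theorem.
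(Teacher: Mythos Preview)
Your treatment of the residues $0$ and $1$ matches the paper's exactly. For residues $5$ and $20$, however, sticking with the dodecahedral action $G$ runs into a genuine obstruction. First, $G$ has no orbit of size~$5$: such an orbit would require a point with stabilizer isomorphic to $A_4$, but every non-trivial element of $G$ is a rotation whose fixed circle passes through the center $c$ of the dodecahedron and its $S^3$-antipode $c'$, so the only points fixed by an entire $A_4$ subgroup are $c$ and $c'$ themselves, and their stabilizer is all of $G$. (The five inscribed tetrahedra do not yield five distinct points---their centers all coincide with $c$.) So your set $V_5$ does not exist in this action.

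Second, for $V_{20}$ your geometry is incorrect: the edges of the dodecahedron do \emph{not} lie on the order-$3$ fixed circles, since those axes pass through pairs of \emph{opposite} vertices, not adjacent ones. The adjacent pairs in $V_{20}$ pointwise fixed by a non-trivial element of $G$ are precisely the ten antipodal pairs, and on each order-$3$ fixed circle the two vertices of such a pair are separated by $c$ and $c'$; hence any arc $A_{vw}$ joining them must contain $c$ or $c'$ in its interior. Since all ten arcs are forced through one of two points, hypothesis~(2) of the Edge Embedding Lemma cannot be satisfied (and hypothesis~(3) fails as well, since $c$ is fixed by every $g\in G$, which would force every $g$ to stabilize each $A_{vw}$). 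The paper resolves both issues by introducing a \emph{second} isometric $A_5$-action $H$ on $S^3$, the symmetry group of a regular $4$-simplex $\sigma$, and using it for the $+5$ and $+20$ cases. In that action the order-$5$ elements are glide rotations with empty fixed point set, so there is no global fixed point; the five vertices of $\sigma$ form an honest orbit of size~$5$, and for $W_{20}$ one places two points symmetrically on each of the ten edges of $\sigma$, taking the required arcs along those edges exactly as in the $S_4$ argument.
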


\begin{proof}   Let $G\cong A_5$ denote the finite group of orientation preserving isometries of $S^3$ which leaves a regular solid dodecahedron $D$ setwise invariant.  Every element of this group is a rotation, and hence has non-empty fixed point set.  Also the only even order elements of $A_5$ are involutions.  Thus regardless of how we embed our vertices, hypothesis (5) of the Edge Embedding Lemma will be satisfied for the group $G$.  Let $H\cong A_5$ denote the finite group of orientation preserving isometries of $S^3$ which leaves a regular $4$-simplex $\sigma$ setwise invariant.   Observe that the elements of order 2 of $H$ interchange pairs of vertices of the 4-simplex and hence have non-empty fixed point sets.  Thus regardless of how we embed our vertices, hypothesis (5) of the Edge Embedding Lemma will be satisfied for the group $H$.  We will use either $G$ or $H$ for each of our embeddings.

We shall use $G$ to embed $K_{60n}$.  Let $B$ be a ball which is disjoint from the fixed point set of any non-trivial element of $G$ and which is disjoint from its image under every non-trivial  element of $G$.  Choose $n$ points in $B$, and let $V_0$ denote the orbit of these points under $G$.  We embed the vertices of $K_{60n}$ as the points of $V_0$.  Since none of the points of $V_0$ is fixed by any non-trivial element of $G$, the hypotheses of the Edge Embedding Lemma are easy to check.  Thus by applying the Edge Embedding Lemma and the Complete Graph Theorem, we obtain an embedding $\Gamma_0$ of $K_{60n}$ in $S^3$ such that  $A_5\cong \TSG(\Gamma_0)$.

In order to embed $K_{60n+1}$ we again use the isometry group $G$.  We embed the vertices of $K_{60n+1}$ as $V_0\cup \{x\}$, where $x$ is the center of the invariant solid dodecahedron $D$.  Since $x$ is the only vertex which is fixed by a non-trivial element of $G$, the hypotheses of the Edge Embedding Lemma are satisfied for $V_0\cup \{x\}$.  Thus as above, by applying the Edge Embedding Lemma and the Complete Graph Theorem, we obtain an embedding $\Gamma_1$ of $K_{60n+1}$ in $S^3$ such that $A_5\cong \TSG(\Gamma_1)$.

In order to embed $K_{60n+5}$ we use the isometry group $H$.  Let $B'$ be a ball which is disjoint from the fixed point set of any non-trivial element of $H$ and which is disjoint from its image under every non-trivial  element of $H$.  Thus $B'$ is disjoint from the 4-simplex $\sigma$.  Choose $n$ points in $B'$, and let $W_0$ denote the orbit of these points under $H$.   Let $W_5$ denote the set of vertices of the 4-simplex $\sigma$.   We embed the vertices of $K_{60n+5}$ as the points of $W_0\cup W_5$.  Now $W_0\cup W_5$ is setwise invariant under $H$, and $H$ induces a faithful action of $K_{60n+5}$.  The arcs required by hypothesis (2) of the Edge Embedding Lemma are the edges of the  4-simplex $\sigma$.  Thus it is easy to check that the hypotheses of the Edge Embedding Lemma are satisfied for $W_0\cup W_5$.  Hence as above by applying the Edge Embedding Lemma and the Complete Graph Theorem, we obtain an embedding $\Gamma_5$ of $K_{60n+5}$ in $S^3$ such that $A_5\cong \TSG(\Gamma_5)$. 
\begin{figure} [h]
\includegraphics{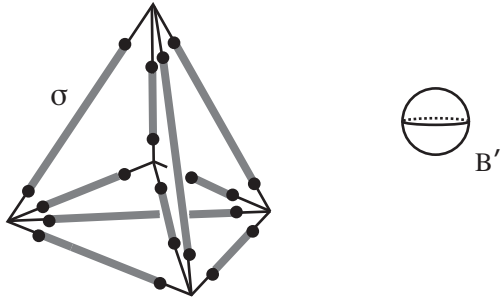}
\caption{The points of $W_{20}$ are symmetrically placed on the 4-simplex $\sigma$.  The arcs required by hypothesis (3) are the gray arcs between vertices on the same edge of $\sigma$.  }
\label{K20A5}
\end{figure}

Finally, in  order to embed $K_{60n+20}$ we again use the isometry group $H$.  Observe that each order 2 element of $H$ fixes one vertex of $\sigma$, each order 3 element of $H$ fixes 2 vertices of $\sigma$, and each order 5 element of $H$ fixes no vertices of $\sigma$.    Let $W_{20}$ denote a set of 20 points which are symmetrically placed on the edges of the 4-simplex $\sigma$ so that $W_{20}$ is setwise invariant under $H$ (see Figure 5).  We embed the vertices of $K_{60n+20}$ as the points of $W_0\cup W_{20}$.   The only pairs of points in $W_{20}$ that are both fixed by a single non-trivial element of $H$ are on the same edge of the 4-simplex $\sigma$ and are fixed by two elements of order 3.  We illustrate the arcs required by hypothesis (2) of the Edge Embedding Lemma as gray arcs in Figure \ref{K20A5}.  Now as above, by applying the Edge Embedding Lemma and the Complete Graph Theorem, we obtain an embedding $\Gamma_{20}$ of $K_{60n+20}$ in $S^3$ such that $A_5\cong \TSG(\Gamma_{20})$.\end{proof}

\bigskip

The following theorem summarizes our results on when a complete graph can have an embedding whose topological symmetry group is isomorphic to $A_5$.

\begin{A5}
A complete graph $K_m$ with $m\geq 4$ has an embedding $\Gamma$ in $S^3$ such that $\TSG(\Gamma) \cong A_5$ if and only if $m \equiv 0$, $1$, $5$, $20 \pmod{60}$.
\end{A5}
\bigskip


\section{Embeddings $\Gamma$ with $\TSG(\Gamma) \cong A_4$}

Recall from Theorem \ref{A4necessary} that if $K_m$ has an embedding $\Gamma$ in $S^3$ such that $G=\TSG(\Gamma)\cong A_4$, then $m\equiv 0$, 1, 4, 5, or $8 \pmod {12}$.   In this section we will show that for these values of $m$, there are embeddings of $K_m$ whose topological symmetry group is isomorphic to  $A_4$. 

We begin with the special case of $K_4$.  First, we embed the vertices of $\Gamma$ at the corners of a regular tetrahedron.  Then, we embed the edges of $\Gamma$ so that each set of three edges whose vertices are the corners of a single face of the tetrahedron are now tangled as shown in Figure~\ref{F:face}.  The two dangling ends on each side in Figure~\ref{F:face} continue into an adjacent face with the same pattern.  If we consider the knot formed by the three edges whose vertices are the corners of a single face (and ignore the other edges of $\Gamma$), we see that this cycle is embedded as the knot $K$ illustrated in Figure \ref{F:knot}.

\begin{figure} [h]
\includegraphics{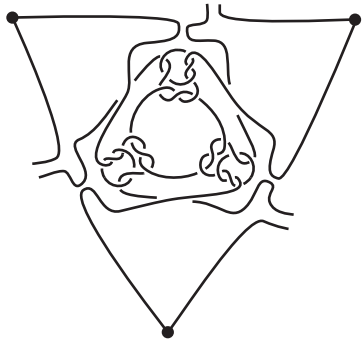}
\caption{One face of the embedding $\Gamma$ of $K_4$.} \label{F:face}
\end{figure}

\begin{figure} [h]
\includegraphics{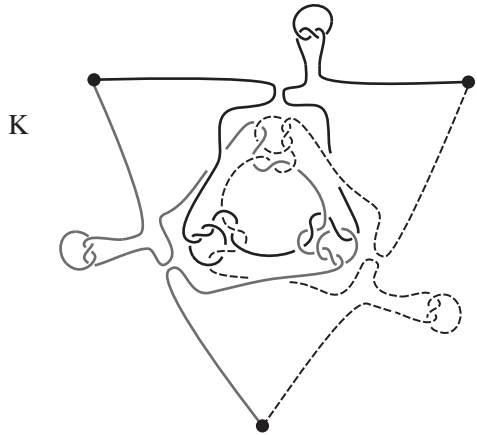}
\caption{The knot $K$ formed by three edges whose vertices are the corners of a single face.   We indicate the three edges of the triangle with different types of lines.} \label{F:knot}
\end{figure}

\begin{lemma}\label{noninvertible}
The knot $K$ in Figure~\ref{F:knot} is non-invertible.
\end{lemma}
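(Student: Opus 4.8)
The plan is to show that $K$ is a non-invertible knot by identifying it as a specific small knot whose invertibility status is known, or by computing an asymmetric invariant. First I would simplify the diagram in Figure~\ref{F:knot}: count crossings, reduce via Reidemeister moves, and check whether $K$ is prime. Given the construction (three arcs tangled in each face of a tetrahedron, with the pattern continuing into adjacent faces), $K$ is likely a knot with fairly few crossings — I would expect it to be the knot $8_{17}$ (the smallest amphichiral non-invertible knot) or, more plausibly given the chirality apparent in the figure, one of the smallest non-invertible knots such as $8_{17}$, $9_{32}$, or $9_{33}$. The first concrete step is therefore to compute a normal form for $K$ — e.g.\ its Alexander or Jones polynomial won't suffice (these don't detect invertibility), so I would instead aim to identify $K$ in the knot tables and invoke the known classification.

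The cleaner approach, and the one I would actually carry out, is to use an invariant that obstructs invertibility directly. A knot $K$ is invertible if there is an orientation-preserving homeomorphism of $S^3$ taking $K$ to itself reversing its orientation. The standard tools to rule this out are: (i) the non-symmetry of the two-variable (HOMFLY or) signature-type invariants under orientation reversal — but these are often too weak; (ii) hyperbolic geometry — if $K$ is hyperbolic, its complement has a finite isometry group, and one checks that no isometry reverses the meridian-longitude framing with the correct orientation; or (iii) explicit computation via $\mathrm{SnapPy}$ of the symmetry group of $S^3 \setminus K$. I would present the argument via route (ii)/(iii): show $K$ is hyperbolic, compute the symmetry group of its complement, and observe that it contains no orientation-reversing-on-$K$ element. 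Concretely, I would state that $K$ can be recognized (after Reidemeister simplification) as a known non-invertible knot from the census, and cite the relevant table or $\mathrm{SnapPy}$ computation.

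The step I expect to be the main obstacle is the honest diagrammatic simplification: extracting a clean, low-crossing diagram of $K$ from the tetrahedral picture in Figure~\ref{F:knot}, since the "dangling ends continue into an adjacent face" description means one must carefully track how the three marked arcs close up into a single closed curve, and the resulting raw diagram may have many crossings that only collapse after a nonobvious sequence of moves. Once a manageable diagram is in hand, recognizing the knot and quoting its non-invertibility is routine. So the proof I would write is: (1) trace the three edges of one face through the embedding $\Gamma$ to produce an explicit diagram of the closed curve $K$; (2) simplify this diagram to a standard form and identify $K$ as a specific tabulated knot; (3) cite the known fact (from the classification of non-invertible knots, or a $\mathrm{SnapPy}$/Perko-style symmetry-group computation) that this knot is non-invertible, completing the proof. \endproof
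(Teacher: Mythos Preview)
Your plan rests on the expectation that $K$ is a prime knot of small crossing number, so that it is hyperbolic and appears in the standard tables. That expectation is wrong, and it is exactly where your proposed argument would break down. As the paper observes, $K$ is a \emph{composite} knot: it decomposes as a connected sum of three trefoils together with a further prime summand $J$ (the closure of a sum of three rational tangles). Consequently $K$ is a satellite knot, not hyperbolic, and it will not be found in any census of prime knots; the SnapPy symmetry-group computation you describe does not apply directly to composite knots, and a table lookup will simply fail.

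The paper's route is to exploit this decomposition rather than fight it. Since the trefoil is invertible, reversing orientation on $K$ gives the same three trefoil summands together with the reverse of $J$; by uniqueness of prime factorization of oriented knots, invertibility of $K$ would force invertibility of $J$. One is thus reduced to showing that the single prime, algebraic (arborescent) knot $J$ is non-invertible, and for this the paper invokes the Bonahon--Siebenmann classification of symmetries of arborescent knots. Your framework could be salvaged after the connected-sum decomposition --- one could instead verify that $J$ is hyperbolic and compute its symmetry group --- but the essential first step, recognizing that $K$ is composite and passing to $J$, is missing from your proposal.
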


\begin{proof}
Observe that $K$ is the connected sum of three trefoil knots together with the knot $J$ illustrated in Figure \ref{prime}.
Suppose $K$ is invertible. Then by the uniqueness of prime factorizations of oriented knots, $J$ would also be invertible.  Since $J$ is the closure of the sum of three rational tangles, $J$ is an algebraic knot.  Thus the machinery of Bonahon and Siebenmann \cite{BS} can be used to show that $J$ is non-invertible.  It follows that $K$ is non-invertible as well.  
\begin{figure} [h]
\includegraphics{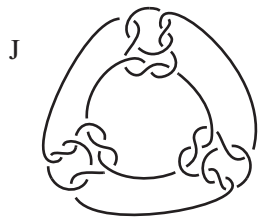}
\caption{$K$ is the connected sum of three trefoil knots together with the knot $J$ illustrated here.} \label{prime}
\end{figure}
\end{proof}
\bigskip

\begin{prop} \label{T:K4}
Let $\Gamma$ be the embedding of $K_4$ in $S^3$ described above.  Then $\TSG(\Gamma) \cong  A_4$, and $\TSG(\Gamma)$ is induced by the group of rotations of a solid tetrahedron.\end{prop}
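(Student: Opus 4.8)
The plan is to show first that $\TSG(\Gamma)$ contains a subgroup isomorphic to $A_4$ and then that it contains nothing more. For the lower bound, the embedding $\Gamma$ was constructed to be setwise invariant under the group $R\cong A_4$ of rotations of the regular tetrahedron whose corners carry the vertices of $K_4$: the three edges on each face are tangled by the same local pattern of Figure~\ref{F:face}, and this pattern is chosen to be compatible across adjacent faces, so every rotation of the tetrahedron carries $\Gamma$ to itself. Each such rotation is orientation preserving on $S^3$, so it induces an element of $\TSG(\Gamma)$, and since $R$ acts faithfully on the four vertices, the induced subgroup of $\Aut(K_4)\cong S_4$ is isomorphic to $A_4$. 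Hence $A_4 \leq \TSG(\Gamma)$.

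For the upper bound, note $\TSG(\Gamma)\leq \Aut(K_4)\cong S_4$, so it suffices to rule out $\TSG(\Gamma)\cong S_4$, i.e.\ to show no orientation preserving homeomorphism of $(S^3,\Gamma)$ induces a transposition (equivalently, an odd permutation) of the vertices. Suppose $f$ is such a homeomorphism inducing the transposition $(v_1\,v_2)$, fixing $v_3$ and $v_4$. Then $f$ takes the face-cycle on $\{v_1,v_3,v_4\}$ to the face-cycle on $\{v_2,v_3,v_4\}$, but it does so by reversing the orientation of that cycle — following the arc $\overline{v_3v_4}$, then $\overline{v_4 v_1}\mapsto \overline{v_4 v_2}$, etc.\ traverses the triangle $v_3v_4v_1$ in the order $v_3\to v_4\to v_1\to v_3$ and its image triangle $v_3 v_4 v_2$ in the order $v_3\to v_4\to v_2\to v_3$; comparing with the cyclic order on the target triangle shows the knot is mapped to itself-reversed. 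More carefully: each face cycle of $\Gamma$ is embedded as the knot $K$ of Figure~\ref{F:knot}, and an odd vertex permutation restricted to a suitable face must carry that embedded cycle to an embedded cycle of $\Gamma$ by an orientation-reversing homeomorphism of $S^3$, hence would exhibit $K$ as invertible. This contradicts Lemma~\ref{noninvertible}. Therefore $\TSG(\Gamma)$ contains no odd permutation, so $\TSG(\Gamma)\cong A_4$, and by construction it is induced by the rotation group of the solid tetrahedron.

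The main obstacle is the bookkeeping in the upper-bound step: one must check that some odd permutation of the vertices, if realized by a homeomorphism of $(S^3,\Gamma)$, genuinely forces an orientation-reversing self-homeomorphism of one of the face knots $K$ — that is, that the induced map on the relevant $3$-cycle really reverses its orientation rather than merely permuting its edges in an orientation-preserving way. This amounts to tracking how a transposition acts on the cyclic orderings of the four triangular faces and observing that at least one face is sent to another face by an orientation-reversing cyclic relabeling; since $K$ is a non-invertible knot by Lemma~\ref{noninvertible}, and since the homeomorphism is orientation preserving on $S^3$ while reversing the cycle's orientation, this is impossible. Once that combinatorial point is nailed down, the rest follows immediately from the faithful $A_4$-action already built into $\Gamma$ and from $\Aut(K_4)\cong S_4$ having $A_4$ as its unique index-two subgroup.
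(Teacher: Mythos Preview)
Your overall strategy matches the paper's, and your observation that $\TSG(\Gamma)\le\Aut(K_4)\cong S_4$ is a genuine simplification: the paper instead invokes the Complete Graph Theorem to reduce to the three possibilities $A_4$, $S_4$, $A_5$ and then appeals to Theorem~\ref{A5necessary} to eliminate $A_5$, which is unnecessary since $A_5$ is not even a subgroup of $S_4$.

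However, the upper-bound step has a real gap, and it comes from your choice of face. You apply the hypothetical transposition $(v_1\,v_2)$ to the face $\{v_1,v_3,v_4\}$, which lands on the \emph{different} face $\{v_2,v_3,v_4\}$. To deduce that $K$ is invertible from this, you must first fix a coherent orientation on all four face cycles (so that each is the oriented knot $K$ rather than its reverse) and then argue that an odd permutation swaps this orientation; that is the ``bookkeeping'' you flag, and you never actually carry it out. It can be done (the two $A_4$-conjugacy classes of $3$-cycles give the coherent orientation, and odd elements of $S_4$ exchange them), but there is a much cleaner route, which is what the paper does: apply $(ab)$ to a face that \emph{contains both} $a$ and $b$, say $\{a,b,c\}$. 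This face is sent to itself setwise, and the oriented cycle $a\to b\to c$ is sent to $b\to a\to c$, its reverse. Thus the single embedded knot on that face is taken to itself with reversed orientation by an orientation-preserving homeomorphism of $S^3$, directly contradicting Lemma~\ref{noninvertible}. No comparison between different faces is needed.

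Two smaller points. First, your sentence ``carry that embedded cycle to an embedded cycle of $\Gamma$ by an orientation-reversing homeomorphism of $S^3$'' misstates the situation; the ambient homeomorphism is orientation \emph{preserving}, while the induced map on the knot reverses the knot's orientation --- you get this right later, but the earlier phrasing is wrong. Second, for the lower bound the paper does not simply assert invariance ``by construction''; it verifies the order-$3$ and order-$2$ rotational symmetries explicitly via unfolded pictures of the tangle pattern. Your claim that the pattern was ``chosen to be compatible across adjacent faces'' is exactly what needs checking, and you should either perform that check or point to it.
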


\begin{proof}  It follows from the Complete Graph Theorem that if $A_4$ is isomorphic to a subgroup of $ \TSG(\Gamma)$, then $\TSG(\Gamma)$ is isomorphic to either $A_4$, $S_4$, or $A_5$.
We will first show that $ \TSG(\Gamma)$ contains a group isomorphic to $A_4$, and then that $\TSG(\Gamma)$ is not isomorphic to either $S_4$ or $A_5$.

We see as follows that $\Gamma$ is setwise invariant under a group of rotations of a solid tetrahedron.   The fixed point set of an order three rotation of a solid tetrahedron contains a single vertex of the tetrahedron and a point in the center of the face opposite that vertex.  To see that $\Gamma$ is invariant under such a rotation, we unfold three of the faces of the tetrahedron.  The unfolded picture of $\Gamma$ is illustrated in Figure \ref{F:K4net1}.  In order to recover the embedded graph $\Gamma$ from Figure \ref{F:K4net1}, we glue together the pairs of sides with corresponding labels.  When we re-glue these pairs, the three vertices labeled $x$ become a single vertex.  We can see from the unfolded picture in Figure \ref{F:K4net1} that there is a rotation of $\Gamma$ of order three which fixes the point $y$ in the center of the picture, together with the vertex $x$.  
\begin{figure} [h]
\includegraphics{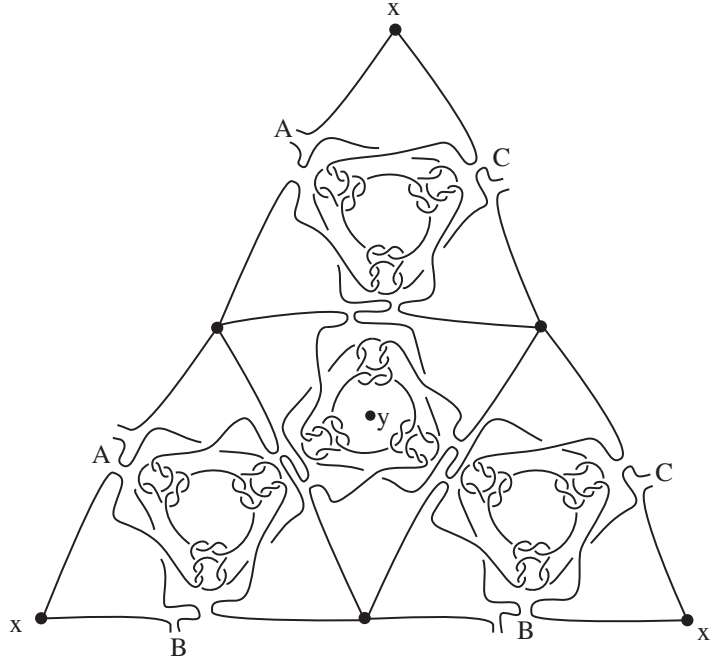}
\caption{This unfolded view illustrates an order three symmetry of $\Gamma$.} \label{F:K4net1}
\end{figure}

The fixed point set of a rotation of order two of a tetrahedron contains the midpoints of two opposite edges.  This rotation interchanges the two faces which are adjacent to each of these inverted edges.  To see that $\Gamma$ is invariant under such a rotation, we unfold the tetrahedron into a strip made up of four faces of the tetrahedron.  The unfolded picture of $\Gamma$ is illustrated in Figure \ref{F:K4net2}.  In order to recover the embedded graph $\Gamma$ from Figure \ref{F:K4net2}, we glue together pairs of sides with corresponding labels.  When we glue these pairs, the two points labeled $v$ are glued together.  We can see from the unfolded picture in Figure \ref{F:K4net2} that there is a rotation of $\Gamma$ of order two which fixes the point $w$ that is in the center of the picture together with the point $v$.  Thus $\TSG(\Gamma)$ contains a subgroup isomorphic to $A_4$.

\begin{figure} [h]
\includegraphics{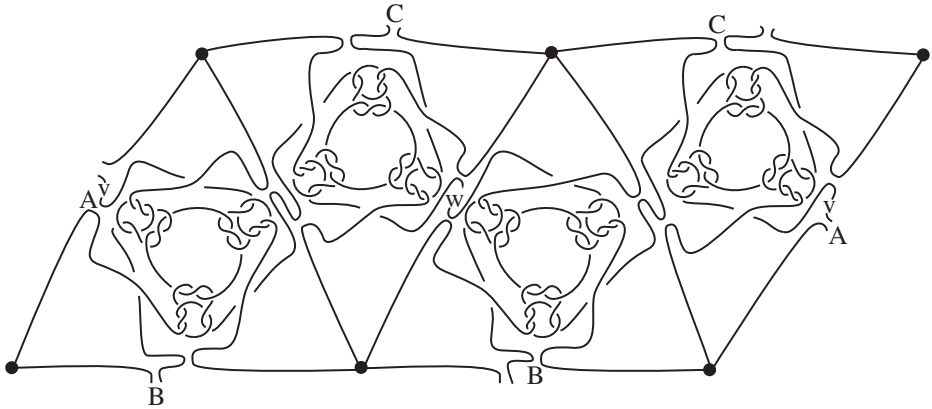}
\caption{This unfolded view illustrates an order two symmetry of $\Gamma$.} \label{F:K4net2}
\end{figure}

Now assume that $S_4\cong\TSG(\Gamma)$.  Label the four vertices of $\Gamma$ by the letters $a$, $b$, $c$, and $d$.  Then there is a homeomorphism $h$ of $S^3$ which leaves $\Gamma$ setwise invariant while inducing the automorphism $(ab)$ on its vertices.  In particular, the image of the oriented cycle $abc$ is the oriented cycle $bac$.  Thus the simple closed curve in $\Gamma$ with vertices $abc$ is inverted by $h$.   However, this simple closed curve is the knot $K$ illustrated in Figure~\ref{F:knot}, and we proved in Lemma \ref{noninvertible} that $K$ is non-invertible.  Therefore, $S_4 \not\cong TSG(\Gamma)$.

Finally, by Theorem 2, $A_5 \not \cong  \TSG(\Gamma)$, which completes the proof.
\end{proof}
\bigskip

Now we will show that for all $m>4$ such that $m\equiv 0,1,4, 5, 8\pmod {12}$, there is an embedding $\Gamma$ of $K_m$ in $S^3$ with $\TSG(\Gamma)\cong A_4$. The following Theorem from \cite{FMN} will be used in the proof.

\bigskip

\begin{Subgroup}\cite{FMN} \label{L:fundamentaledge}
Let $\Gamma$ be an embedding of a 3-connected graph in $S^3$.  Suppose that $\Gamma$ contains an edge $e$ which is not pointwise fixed by any non-trivial element of $\TSG(\Gamma)$.  Then for every $H\leq \TSG(\Gamma)$, there is an embedding $\Gamma'$ of $\Gamma$ with $H = \TSG(\Gamma')$.
\end{Subgroup}

\bigskip

\begin{prop}\label{T:A4} Suppose that $m>4$ and $m\equiv 0,1,4, 5, 8\pmod {12}$.  Then there is an embedding $\Gamma$ of $K_m$ in $S^3$ such that $\TSG(\Gamma)\cong A_4$. \end{prop}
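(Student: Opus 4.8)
<br>

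The plan is to handle the remaining cases $m \equiv 0, 1, 4, 5, 8 \pmod{12}$ with $m > 4$ by combining the $K_4$ embedding of Proposition \ref{T:K4} with the Edge Embedding Lemma, and then invoking the Subgroup Theorem to pin down the group exactly. First I would fix $G \cong A_4$ acting on $S^3$ as the rotation group of a regular solid tetrahedron $T$ with $1$-skeleton $\tau$. As in the $S_4$ and $A_5$ constructions, I would choose a ball $B$ disjoint from the fixed point set of every non-trivial element of $G$ and disjoint from all of its images under $G$; picking $n$ points in $B$ and taking the orbit gives a set $V_0$ of $12n$ points on which $G$ acts freely. For the ``extra'' vertices I would use three building blocks analogous to the $S_4$ case: $V_1 = \{x\}$, the center of $T$, handling the ``$+1$'' (so $m \equiv 1 \pmod{12}$); $V_4$, the four corners of $\tau$, handling ``$+4$'' (with the arcs required by hypothesis (2) of the Edge Embedding Lemma being the edges of $\tau$ — but embedded with the tangling of Proposition \ref{T:K4} so that the face-triangles are the non-invertible knot $K$); and a set $V_8$ consisting of the vertices of two nested $G$-invariant tetrahedra $\tau_1 \subset T$ and $\tau_2 \subset S^3 - T$ joined by arcs in the fixed point sets of the order-$3$ elements, handling ``$+8$''. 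Then $V_0$, $V_0 \cup V_1$, $V_0 \cup V_4$, $V_0 \cup V_4 \cup V_1$, and $V_0 \cup V_8$ realize the five residue classes $0,1,4,5,8 \pmod{12}$.

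The core of the argument is checking that each of these vertex sets satisfies hypotheses (1)--(5) of the Edge Embedding Lemma for the group $G$. Hypothesis (5) is automatic: every non-trivial element of $A_4$ with non-empty fixed point set is a rotation (order $2$ or $3$) whose fixed circle is distinct from that of any other element, and no element of $G$ interchanges a pair of vertices of $V_0$, $V_1$, $V_4$, or $V_8$ — for $V_4$ this is because the order-$3$ rotations fix a corner and cyclically permute the other three, while the order-$2$ rotations swap vertices in pairs; wait, actually an order-$2$ rotation of $T$ does interchange two pairs of corners, so for the $V_4$ and $V_4 \cup V_1$ cases I would instead route those interchanged edges using hypothesis (4)/(5) machinery already built into the Edge Embedding Lemma, exactly as in Proposition \ref{T:K12}. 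Hypothesis (1) holds because the only pairs of vertices lying in a common fixed point set are pairs of corners of a single $\tau_i$ in the $V_8$ case (or corners of $\tau$ in the $V_4$ case), and for these the fixed circle is uniquely determined. Hypotheses (2) and (3) are handled by exhibiting the required arcs explicitly — along edges of $\tau$, or along the connecting arcs between $\tau_1$ and $\tau_2$ — and checking they have disjoint interiors and the correct setwise-invariance, just as in the $S_4$ figures. Hypothesis (4) is trivial when the fixed vertex set of an involution is empty or a single point; the one subtlety is ensuring that in the $V_4$ (and $V_8$) constructions the tangling of the edges does not create a pair of fixed vertices on a circle in a way that violates (4), which reduces to the same check done for $K_4$.

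Once the Edge Embedding Lemma produces a $G$-invariant embedding $\Gamma'$ of $K_m$, I would argue that $\TSG(\Gamma') \cong A_4$ exactly. We get $A_4 \leq \TSG(\Gamma')$ from the $G$-action; by the Complete Graph Theorem the only larger possibilities are $S_4$ and $A_5$. I would rule these out using the non-invertible knot: in each construction the edges should be embedded so that some triangle (a $3$-cycle of vertices) is the knot $K$ of Figure \ref{F:knot}, or more precisely so that $\Gamma'$ contains a subgraph in which an automorphism of $\TSG(\Gamma')$ not already in $A_4$ would be forced to invert $K$, contradicting Lemma \ref{noninvertible} — exactly the mechanism of Proposition \ref{T:K4}. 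For the free-orbit-only case $m \equiv 0 \pmod{12}$ there is no tetrahedron on which to tangle, so instead I would apply the Subgroup Theorem: realize a larger group first (say, start from an embedding with an edge not fixed by any non-trivial symmetry and full symmetry group at least $A_4$) and then cut down to $H = A_4$; alternatively, and more uniformly, for every one of these cases note that each embedded $K_m$ contains an edge $e$ lying in the free ball $B$, hence not pointwise fixed by any non-trivial element, so the Subgroup Theorem applies and lets me pass from whatever group the raw construction gives to exactly $A_4$.

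The main obstacle I anticipate is the combination of two things: first, ensuring hypothesis (2)'s arcs can be chosen disjointly when vertices sit on the edges of $\tau$ and simultaneously the edges of $K_m$ must be tangled (as in $K_4$) to force non-invertibility — these two requirements pull in opposite directions and must be reconciled; and second, the $m \equiv 0 \pmod{12}$ case, where the naive construction has no obstruction to $S_4$ or $A_5$ at all, so one genuinely needs the Subgroup Theorem (hence the $3$-connectedness of $K_m$ for $m \geq 4$ and the existence of a ``free'' edge) rather than a knotting argument. Everything else is a careful but routine verification patterned on Propositions \ref{T:K12} and \ref{P:A5}.
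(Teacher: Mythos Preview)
Your proposal has two genuine gaps, and it misses the paper's key organizing idea.

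First, the knotting mechanism from Proposition~\ref{T:K4} cannot be combined with the Edge Embedding Lemma the way you suggest. That lemma produces edges by lifting paths from the quotient $(S^3 - Y)/G$; you get no control over the knot types of the resulting cycles, so you cannot arrange for ``some triangle to be the knot $K$'' while simultaneously invoking the lemma. The $K_4$ construction in Proposition~\ref{T:K4} is done entirely by hand for exactly this reason. Second, your Subgroup Theorem fallback is not well-formed: the hypothesis requires an edge not pointwise fixed by any non-trivial element of $\TSG(\Gamma')$, not merely of $G \cong A_4$. Since your construction only gives $A_4 \leq \TSG(\Gamma')$, you do not know $\TSG(\Gamma')$, and an edge with vertices in the free ball $B$ is free only with respect to $G$. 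Worse, for the small cases ($m = 5, 8, 12, 13, 17, \dots$, i.e.\ $n=0$) there are no vertices in $B$ at all. The case $m = 5$ is a hard obstruction: if your $\Gamma'$ happens to have $\TSG(\Gamma') \cong A_5$ (which Theorem~\ref{A5necessary} permits since $5 \equiv 5 \pmod{60}$), then every edge of $K_5$ is pointwise fixed by some $3$-cycle, so no free edge exists and the Subgroup Theorem cannot be applied.

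The paper's argument is organized quite differently and never tries to knot edges for $m > 5$. It splits into cases according to which larger polyhedral groups are arithmetically possible. For $m \equiv 0,4,8,12,20 \pmod{24}$ it starts from the $S_4$ embeddings of Proposition~\ref{T:K12} (where $\TSG$ is known to be exactly $S_4$, induced by an explicit isometry group), exhibits an edge not fixed by any non-trivial element of that group, and applies the Subgroup Theorem to cut down to $A_4$. For $m \equiv 1$ or $5 \pmod{60}$ with $m > 5$ it does the same starting from the $A_5$ embeddings of Proposition~\ref{P:A5}. For all remaining residues --- $m \equiv 16 \pmod{24}$, and $m \equiv 1$ or $5 \pmod{12}$ but not $\pmod{60}$ --- the crucial point you are missing is that Theorems~\ref{A5necessary} and~\ref{S4necessary} already forbid $\TSG \cong A_5$ or $S_4$ for these $m$; so a direct $A_4$-invariant construction via the Edge Embedding Lemma plus the Complete Graph Theorem forces $\TSG \cong A_4$ with no further work. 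Only $m = 5$ needs a bespoke embedding (trefoils on the tetrahedron edges, with a separate argument ruling out $A_5$). The missing tool in your plan is precisely this use of the necessity theorems to eliminate the larger groups arithmetically rather than topologically.
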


\begin{proof} We first consider the cases where $m\equiv 0, 4,8,12, 20 \pmod{24}$.  Let $G$ denote the finite group of orientation preserving isometries of the 1-skeleton $\tau$ of a regular tetrahedron. Recall from the proof of Proposition \ref{T:K12} that for each $k=$ 0, 4, 8, 12, or 20, we embedded $K_{24n+k}$ as a graph $\Gamma_k$ with vertices in the set $V_0\cup V_4\cup V_8\cup V_{12}$ such that $\TSG(\Gamma_k)\cong S_4$ is induced by $G$.  We will show that each $\Gamma_k$ has an edge which is is not pointwise fixed by any non-trivial element of $G$.   

First suppose that $m=24n+k$ where $n>0$ and $k=$ 0, 4, 8, 12, or 20.   Recall that $V_0$ contains $24n$ vertices none of which is fixed by any non-trivial element of $G$.  Let $e_0$ be an edge of $\Gamma_k$ with vertices in $V_0$.  Then $e_0$ is not pointwise fixed by any non-trivial element of $G$.  Hence by the Subgroup Theorem, there is an embedding $\Gamma$ of $K_{m}$ with $A_4 \cong  \TSG(\Gamma)$.

  Next we suppose that $n=0$, and let $\Gamma_8$, $\Gamma_{12}$, and $\Gamma_{20}$ denote the embeddings of $K_8$, $K_{12}$, and $K_{20}$ given in the proof of Proposition \ref{T:K12}.  Let $e_8$ be an edge in $\Gamma_8$ whose vertices are not the endpoints of one of the gray arcs in Figure \ref{K8S4}, let $e_{12}$ be an edge in $\Gamma_{12}$ whose vertices are not the endpoints of one of the gray arcs in Figure \ref{K12S4}, and let $e_{20}$ be an edge in $\Gamma_{20}$ whose vertices are not the endpoints of one of the gray arcs in Figure \ref{K20S4}. In each case, $e_k$ is not pointwise fixed by any non-trivial element of $G$.  Hence by the Subgroup Theorem, there is an embedding $\Gamma$ of $K_{k}$ with $A_4 \cong  \TSG(\Gamma)$.

Next we consider the case where $m=24n+16$.  Then $m=12(2n+1)+4$.  Let $H\cong A_4$ denote a finite group of orientation preserving isometries of $S^3$ which leaves a solid tetrahedron $T$ setwise invariant.   Then every element of $H$ has non-empty fixed point set, and the only even order elements are involutions.  Thus regardless of how we embed our vertices, hypothesis (5) of the Edge Embedding Lemma will be satisfied for $H$.  Also, observe that no edge of the tetrahedron $T$ is pointwise fixed by any non-trivial element of $H$. Let $W_4$ denote the vertices of $T$.   Let $B$ denote a ball which is disjoint from the fixed point set of any non-trivial element of $H$, and which is disjoint from its image under every non-trivial element of $H$. Choose $2n+1$ points in $B$ (recall that we are not assuming that $n>0$) and let $W_0$ denote the orbit of these points under $H$.   We embed the vertices of $K_{12(2n+1)+4}$ as the points of $W_0\cup W_4$.  Since no pair of vertices in $W_0\cup W_4$ are both fixed by a non-trivial element $h\in H$, it is easy to see that  hypotheses (1) - (4) of the Edge Embedding Lemma are satisfied.  Thus by applying the Edge Embedding Lemma we obtain an embedding $\Gamma_{16}$ of $K_{24n+16}$ which is setwise invariant under $H$.  Now by Theorems 2 and 3, we know that $\TSG(\Gamma_{16})\not\cong A_5$ or $S_4$.  Now it follows from the Complete Graph Theorem that $\TSG(\Gamma_{16})\cong A_4$.  

Thus we have shown that if $m\equiv 0$, 4, or 8 $\pmod {12}$ and $m>4$, then there is an embedding $\Gamma$ of $K_m$ in $S^3$ with $\TSG(\Gamma)\cong A_4$.  

Next suppose that $m=12n+1$ and $m\not \equiv 1\pmod{60}$.  Let the group $H$ and the ball $B$ be as in the above paragraph.  Choose $n$ points in $B$ and let $U_0$ denote the orbit of these points under $H$.  Let $v$ denote one of the two points of $S^3$ which is fixed by every element of $H$.  We embed the vertices of $K_m$ as $U_0\cup \{v\}$.  Since no pair of vertices in $U_0\cup \{v\}$ are both fixed by a non-trivial element $h\in H$, it is easy to check that the hypotheses of the Edge Embedding Lemma are satisfied.  Now by the Edge Embedding Lemma together with Theorems 2 and 3 and the Complete Graph Theorem we obtain an embedding $\Gamma$ of $K_m$ such that $\TSG(\Gamma)\cong A_4$.  

Similarly, suppose that $m=12n+5$ and $m\not \equiv 5\pmod{60}$.  Let $H$, $U_0$, $W_4$, and $v$ be as above.  We embed the vertices of $K_m$ as $U_0\cup W_4\cup \{v\}$.  The arcs required by hypothesis (2) of the Edge Embedding Lemma are highlighted in gray in Figure \ref{K5A4}.  Now again by the Edge Embedding Lemma together with Theorems 2 and 3 and the Complete Graph Theorem we obtain an embedding $\Gamma$ of $K_m$ such that $\TSG(\Gamma)\cong A_4$.

\begin{figure} [h]
\includegraphics{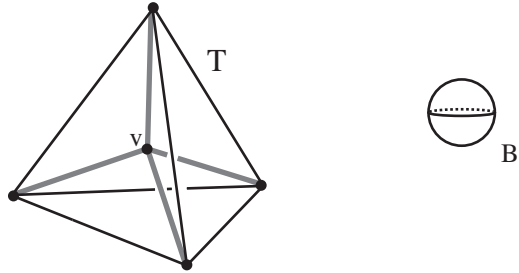}
\caption{The vertices of $W_4\cup \{v\}$ are indicated by black dots.  The arcs required by hypothesis (3) are highlighted in gray. }
\label{K5A4}
\end{figure}

Next suppose that $m=60n+1$ or $m=60n+5$ where $n>0$.  Let $G_1$ denote the group of orientation preserving symmetries of a regular solid dodecahedron and let $G_5$ denote the  group of orientation preserving symmetries of a regular 4-simplex.  We first embed $K_{60n+1}$ and $K_{60n+5}$ as the graphs $\Gamma_1$ and $\Gamma_5$ respectively given in the proof of Proposition \ref{P:A5} such that $\TSG(\Gamma_k)\cong A_5$ is induced by $G_k$, where $k = 1, 5$.  Since $n>0$, we can choose an edge $e_0$ of $\Gamma_k$ both of whose vertices are in $V_0$.  Then $e_0$ is not pointwise fixed by any non-trivial element of $G_k$ .  Hence by the Subgroup  Theorem, we obtain an embedding $\Gamma$ of $K_{m}$ such that $\TSG(\Gamma)\cong A_4$.

Finally, let $m=5$.  Let $\mu$ denote an embedding of the 1-skeleton of a regular solid tetrahedron $T$ so that the edges of $\mu$ each contain an identical trefoil knot.  Let $\Gamma$ denote these vertices and edges together with a vertex at the center of $T$ which is connected via unknotted arcs to the other vertices of $T$ (see Figure \ref{K5}).  We choose $\Gamma$ so that it is setwise invariant under a group of orientation preserving isometries of $T$.  Thus $\TSG(\Gamma)$ contains a subgroup isomorphic to $A_4$.  Since $\Gamma$ is an embedding of $K_5$, by Theorem 2  we know that $\TSG(\Gamma)\not \cong S_4$.  Furthermore, any homeomorphism of $(S^3,\Gamma)$, must take each triangle which is the connected sum of 3 trefoil knots to a triangle which also is the connected sum of 3 trefoil knots.  Thus $\TSG(\Gamma)$ must leave $\mu$ setwise invariant.  Since $\mu$ is an embedding of $K_4$, $\TSG(\Gamma)$ induces a faithful action on $K_4$.  However, $A_5$ cannot act faithfully on $K_4$.  Thus $\TSG(\Gamma)\not\cong A_5$.  Now it follows from the Completeness Theorem that  $\TSG(\Gamma)\cong A_4$.  

 \begin{figure} [h]
\includegraphics{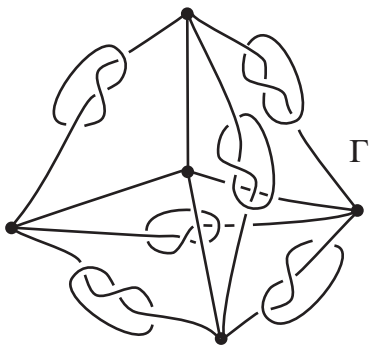}
\caption{An embedding $\Gamma$ of $K_5$ such that $\TSG(\Gamma)=A_4$. }
\label{K5}
\end{figure}

The above four paragraphs together show that if $m\equiv 1, 5\pmod {12}$ and $m>4$, then there is an embedding $\Gamma$ of $K_m$ in $S^3$ with $\TSG(\Gamma)\cong A_4$.  
\end{proof}

\bigskip

The following theorem summarizes our results on when a complete graph can have an embedding whose topological symmetry group is isomorphic to $A_4$.

\begin{A4}
A complete graph $K_m$ with $m\geq 4$ has an embedding $\Gamma$ in $S^3$ such that  $\TSG(\Gamma) \cong  A_4$ if and only if $m \equiv 0$, $1$, $4$, $5$, $8 \pmod {12}$.\end{A4}

\bigskip

\end{document}